\documentclass[11pt]{article}

\usepackage{amsmath,amssymb,amsfonts,textcomp,amsthm,xifthen,psfrag,graphicx,color,MnSymbol}
\usepackage[T1]{fontenc}
\usepackage{hyperref}
\usepackage{tikz}
\usepackage{pgfplots}
\pgfplotsset{compat=1.18}

\newtheorem{theorem}{Theorem}
\newtheorem{lemma}[theorem]{Lemma}

\newtheorem{prop}[theorem]{Proposition}

\newtheorem{ass}[theorem]{Assumption}

\newcommand{\<}{\langle{}}
\renewcommand{\>}{\rangle}
\newcommand{\R}{\ensuremath{\mathbb{R}}}

\newcommand{\ip}[2]{\llangle#1\hspace*{.5mm},#2\rrangle}
\newcommand{\dual}[2]{\<#1\hspace*{.5mm},#2\>}
\newcommand{\vdual}[2]{(#1\hspace*{.5mm},#2)}

\newcommand{\dy}{\,\mathrm{d}y}
\newcommand{\Cstab}[1]{{C_\mathrm{stab}^{#1}}}

\newcommand{\tu}{{\hat u}}
\newcommand{\tw}{{\hat w}}
\newcommand{\ttheta}{{\hat\theta}}
\newcommand{\tn}{{\hat n}}
\newcommand{\tq}{{\hat q}}
\newcommand{\tm}{{\hat m}}

\newcommand{\bu}{\boldsymbol{u}}
\newcommand{\bw}{\boldsymbol{w}}
\newcommand{\bv}{\boldsymbol{v}}

\newcommand{\du}{{\delta\!u}}
\newcommand{\dw}{{\delta\!w}}
\newcommand{\dtheta}{{\delta\!\theta}}
\newcommand{\dn}{{\delta\!n}}
\newcommand{\dq}{{\delta\!q}}
\newcommand{\dm}{{\delta\!m}}

\newcommand{\dz}{{\delta\!z}}

\newcommand{\cT}{\mathcal{T}}
\newcommand{\ttt}{{\rm T}}

\title{A DPG method for the circular arch problem\thanks{Financially supported by ANID-Chile through Fondecyt project 1230013}}
\author{Norbert Heuer\thanks{Facultad de Matem\'{a}ticas, Pontificia 
Universidad Cat\'olica de Chile, Avenida Vicu\~na Mackenna 4860, Santiago, 
Chile, email: {\tt nheuer@uc.cl}}, Antti H.~Niemi\thanks{Civil Engineering Research Unit, University of Oulu, Pentti Kaiteran katu 1, Oulu, Finland,\\ email: {\tt antti.niemi@oulu.fi}}
}

\date{}

\begin{document}
\maketitle

\begin{abstract}
We consider an elastic model for a circular arch that incorporates membrane, transverse shear, and bending effects. The central line of the arch is partitioned into elements, and an ultra-weak variational formulation is developed alongside a discontinuous Petrov--Galerkin (DPG) approximation procedure based on so-called optimal test functions. The formulation uses discontinuous stress and displacement interpolations on the element mesh, with corresponding interface variables defined at the nodes. Theoretical analysis predicts optimal convergence rates for all quantities of interest, while also revealing potential error amplification influenced by the curvature of the arch and the imposed boundary conditions. The method is tested on examples with different support configurations. The numerical experiments confirm the theoretical predictions and further demonstrate that the accuracy of the DPG method can be improved by employing a suitably scaled test space norm.

\bigskip
\noindent
{\em Key words}: arch, discontinuous Petrov--Galerkin method, optimal test functions, robustness.

\noindent
{\em AMS Subject Classifications}: 74S05, 74K10, 65L10, 65L60 
\end{abstract}

\section{Introduction} \label{sec_intro}
In the domain of structural engineering and computational mechanics, the precise numerical analysis of thin structures such as circular arches has long been an area of both challenge and significance. This study embarks on an in-depth exploration of the Discontinuous Petrov--Galerkin (DPG) variational framework specifically tailored for the analysis of thin circular arches. The contribution fills a research void between the authors' previous investigation on DPG methodology for thin structures, which centered on beams, plates, and shells, see e.g.~\cite{niemi_discontinuous_2011,fuhrer_ultraweak_2018,fuhrer_dpg_2022}. 

In the longer course of events, the closest ancestor of the present contribution is probably the Petrov--Galerkin method for circular arches by Loula \emph{et al.}~which employs a mixed finite element approach based on the so-called Hellinger--Reissner principle \cite{LoulaFHM_87_SCA}. Even though Petrov--Galerkin methods have their origin in transport problems, they have certain benefits also in the numerical analysis of thin structures. As is well known by now, numerical methods for thin structures based on standard variational formulations of canonical energy principles are stable by construction, but they are susceptible to various locking effects which lead to sub-optimal convergence rates, or even to a total lack of convergence, see \cite{ashwell_limitations_1971} for an early discussion in the context of circular arches. Non-standard variational formulations are typically not so prone to numerical locking effects; however, ensuring discrete stability poses its own challenges, which can be addressed using Petrov--Galerkin formulations.

Other studies addressing numerical locking of arch structures have been carried out by Reddy and co-workers in \cite{reddy_convergence_1988,reddy_mixed_1992,arunakirinathar_mixed_1993} and by Chapelle in \cite{chapelle_locking-free_1997}. These studies take the axial membrane force and the transverse shear force as independent unknowns and employ mixed formulations in a somewhat similar manner as Arnold in his pioneering work \cite{arnold_discretization_1981} concerning straight beams. These mixed methods are closely related to the standard finite element method with reduced integration. They are also connected to the non-conforming approach that arises from geometrically approximating a curved arch by an assemblage of beam elements, as analyzed by Bernadou and Ducatel \cite{bernadou_approximation_1982} and Kikuchi and Tanizawa \cite{kikuchi_accuracy_1984}. Pitk\"{a}ranta has analyzed the method as a non-conforming finite element method in \cite{pitkaranta_mathematical_2003}.

Like in the earlier DPG formulation for straight beams \cite{niemi_discontinuous_2011}, the starting point of the present formulation is the first order system of equilibrium and constitutive equations of the arch. The stability analysis in the present paper is based on the adoption of the Babu\v{s}ka--Brezzi theory for ultra-weak variational formulations outlined in \cite{CarstensenDG_16_BSF}. The steps consist of bounding the trace operator associated to the interface variables from below and establishing well-posedness of the continuous adjoint problem of the original variational problem.  The former step is established by an explicit discrete characterization of the trace norm in same way as in \cite{FuehrerGH_21_LFD}. The latter step is then carried out by extending the analysis of Loula \emph{et~al.}~in \cite{LoulaFHM_87_SCA} to cover a range of boundary conditions.  Well-posedness and best approximation properties of the DPG scheme then follow.

The stability constant of the DPG method can be bounded uniformly with respect to the slenderness parameter, ensuring robustness in the thin arch limit. However, their dependence on the arch curvature parameter is more intricate and may lead to error amplification for deep arches under high-curvature conditions. Our numerical experiments show that this error growth can be effectively avoided by employing a suitably scaled test space norm. This approach is further supported by both experimental observations and theoretical stability analyses available in the literature, see e.g.~\cite{niemi_discontinuous_2011,zitelli_class_2011,demkowicz_wavenumber_2012,gopalakrishnan_dispersive_2014,demko_heuer_2013,chan_robust_2014,niemi_discontinuous_2013}.

The structure of the paper is as follows. In Section \ref{sec_DPG}, we introduce the model problem, notation, assumptions and summarize the theoretical stability results. Section \ref{sec_pf} is then devoted to the mathematical proofs of the results and the paper is concluded with numerical verification of the theoretical results in Section~\ref{sec_numerics}.

Throughout this paper, inequalities written with the symbols $\lesssim$ and $\gtrsim$ are understood to hold up to a generic positive constant that does not depend on the discretization or critical model parameters.


\section{Model problem, variational formulation, and DPG method} \label{sec_DPG}
An arch of length $L$, occupying the interval $[0,L]$ is considered. The constitutive relations for axial force $n$, shear force $q$, and bending moment $m$ are given by
\[
   n=EA(u'+\frac wR),\quad q=\kappa GA (w'-\frac uR -\theta),\quad m=EI\theta',
\]
where $E$ is the Young modulus, $G$ is the shear modulus, $\kappa$ is the shear correction coefficient, and $A$ and $I$ stand for the first and second moment of area of the arch cross section, respectively. The variables $u,w,\theta$ denote the axial, transverse, and rotational displacements, respectively. These are accompanied by equilibrium equations that reflect force and moment balance:
\[
   -n'-\frac qR = f_u,\quad -q' + \frac nR = f_w,\quad -m'-q=0.
\]
To facilitate analysis and computation, we follow the scaling approach introduced in \cite{LoulaFHM_87_SCA} and define the following dimensionless variables and data:
\begin{align*}
   \frac uL &\hookrightarrow u, & \frac wL &\hookrightarrow w, \\
   \frac {L^2}{EI}n &\hookrightarrow n, &
   \frac {L^2}{EI} q &\hookrightarrow q, & \frac L{EI}m &\hookrightarrow m,\\
   \frac {L^3}{EI} f_u &\hookrightarrow f_u, &
   \frac {L^3}{EI} f_w &\hookrightarrow f_w.
\end{align*}
Substituting these expressions and introducing the non-dimensional spatial variable $x=s/L\in [0,1]$ instead of the arc length $s\in [0,L]$, we arrive at the scaled system
\begin{subequations} \label{prob}
\begin{alignat}{4}
   \epsilon^2 n - u' - \lambda w &= 0,  \quad &\mu\epsilon^2 q - w' + \lambda u + \theta &= 0,
   &\quad m - \theta' &= 0,
   \label{p1}\\
   -n'-\lambda q  &= f_u,\quad &-q' + \lambda n &= f_w, &\quad -m'-q &= 0, \label{p2}
\end{alignat}
\end{subequations}
with the new parameters
\[
   \epsilon^2=\frac I{AL^2},\quad \lambda=\frac LR, \quad \mu=\frac E{\kappa G}.
\]
Here, $\epsilon$ is a small positive constant characterizing the slenderness of the arch, and the central angle $\lambda$ describes its curvature, or depth. Depending on the value of $\lambda$, the system describes different structural regimes: deep arch behavior for $\lambda \geq 1$, shallow arch behavior for $0.1 \leq \lambda \leq 1$, and bar-beam behavior when $\lambda \ll 1$, where curvature effects become negligible. As $\lambda \rightarrow 0$ (straight-beam limit), the present non-dimensional scaling degenerates, and a separate rescaling of the bar subproblem is required to recover a meaningful straight-bar formulation.

The parameter $\mu$ characterizes the relative stiffness of axial to shear deformation and can be interpreted as a material anisotropy parameter. In isotropic materials, it depends on Poisson's ratio and the shear correction factor. The limit $\mu \rightarrow 0$ ($\kappa \rightarrow \infty$) corresponds to infinite shear stiffness, and imposes the Euler--Bernoulli constraint, effectively eliminating transverse shear deformation from the model.
 
In what follows, we assume $\lambda\in (0,2\pi)$. We take $I=(0,1)$ as the non-dimensional domain and assume $f_u, f_w\in L_2(I)$.

Boundary conditions at the end points $x\in\{0,1\}$ are specified in terms of the displacement variables $u, w, \theta$ and the stress resultants $n, q, m$. The following standard cases are considered:
\begin{equation} \label{BC}
\begin{split}
   \text{fully clamped (``$c$''):}\quad & u(x)=w(x)=\theta(x)=0, \quad n,q,m\ \text{arbitrary at $x$},\\
   \text{pinned (``$p$''):}       \quad & u(x)=w(x)=m(x)=0,    \quad n,q,\theta\ \text{arbitrary at $x$},\\
   \text{sliding support (``$s$''):}      \quad & w(x)=\theta(x)=n(x)=0, \quad u,q,m\ \text{arbitrary at $x$},\\
   \text{axially fixed (``$d$''):}    \quad & u(x)=q(x)=m(x)=0, \quad w,\theta,n\ \text{arbitrary at $x$},\\
   \text{rotational restraint (``$r$''):}   \quad & \theta(x)=n(x)=q(x)=0, \quad u,w,m\ \text{arbitrary at $x$},\\
   \text{free (``$f$''):}         \quad & n(x)=q(x)=m(x)=0,    \quad u,w,\theta\ \text{arbitrary at $x$}.
\end{split}
\end{equation}
Problem \eqref{prob} has unknowns $(u,w,\theta,n,q,m)\in U^c:=H^1(I)^6$
(upper index $c$ refers to ``continuity''). It will be useful to separate the space
of kinematic quantities from the one of static quantities,
\[
    U^c = U^{1,c}\times U^{2,c} := H^1(I)^3 \times H^1(I)^3.
\]
The corresponding spaces with boundary conditions are denoted as
$U^c_{lr}=U^{1,c}_{lr}\times U^{2,c}_{lr}$ with $l,r\in\{c,p,s,d,r,f\}$, e.g.,
for $l=c$ (clamped at the left endpoint) and $r=f$ (free at the right endpoint)
\begin{equation} \label{BC_expl}
\begin{split}
   U^c_{cf} &= \{(u,w,\theta,n,q,m)\in U^c;\; u=w=\theta=0\ \text{at $0$},\ n=q=m=0\ \text{at $1$}\},\\
   U^{1,c}_{cf} &= \{(u,w,\theta)\in U^{1,c};\; u=w=\theta=0\ \text{at $0$}\},\\
   U^{2,c}_{cf} &= \{(n,q,m)\in U^{2,c};\; n=q=m=0\ \text{at $1$}\}.
\end{split}
\end{equation}
We decompose interval $I$ into $N$ sub-intervals (or elements) by selecting
nodes $0=x_0 < x_1 < x_2 < \ldots <x_N=1$, giving rise to a mesh
$\cT=\{I_j=(x_{j-1},x_j);\; j=1,\ldots,N\}$. The length of element $I_j$ is $h_j:=x_j-x_{j-1}$
($j=1,\ldots,N$). We test relations \eqref{prob} with test functions (also distinguishing between
kinematic and non-kinematic variables),
\[
   (\du,\dw,\dtheta,\dn,\dq,\dm)\in
   V := U^1\times U^2:=H^1(\cT)^3\times H^1(\cT)^3
\]
where
\[
   H^1(\cT) := \{z\in L_2(I);\; z|_{I_j}\in H^1(I_j),\ j=1,\ldots, N\}.
\]
(The equations in \eqref{p1} are tested with $\dn,\dq,\dm$ in this order, and the relations
in \eqref{p2} with $\du,\dw,\dtheta$.) Element-wise integration by parts yields
\begin{align}\label{VFa}
   &\vdual{u}{\dn'+\lambda\dq}_\cT + \vdual{w}{\dq'-\lambda\dn}_\cT + \vdual{\theta}{\dm'+\dq}_\cT
   \nonumber\\
   &+ \vdual{n}{\epsilon^2\dn+\du'+\lambda\dw}_\cT
   + \vdual{q}{\mu\epsilon^2\dq+\dw'-\lambda\du-\dtheta}_\cT
   + \vdual{m}{\dm+\dtheta'}_\cT
   \nonumber\\
   &-\dual{u}{\dn} -\dual{w}{\dq} -\dual{\theta}{\dm}
   -\dual{n}{\du} -\dual{q}{\dw}
   -\dual{m}{\dtheta}
   =
   \vdual{f_u}{\du} +\vdual{f_w}{\dw}.
\end{align}
Here, $\vdual{\cdot}{\cdot}_\cT$ is the element-wise $L_2$-bilinear form and
\[
   \dual{z}{\dz} := \sum_{j=1}^N \Bigl(z(x_{j})\dz|_{I_j}(x_{j}) - z(x_{j-1})\dz|_{I_j}(x_{j-1})\Bigr)
   \quad\text{for}\ z\in H^1(I),\ \dz\in H^1(\cT)
\]
where $\dz|_{I_j}(x_k)$ indicates that the trace of $\dz$ at $x_k$ is taken from within $I_j$.
Note that duality $\dual{z}{\dz}$ amounts to testing traces of $z$ at the nodes
with the jumps of $\dz$ there. In fact,
\[
   \dual{z}{\dz} = \dual{\gamma(z)}{\dz}\quad\forall \dz\in H^1(\cT)
\]
with traces $\gamma(z)$ identified as $(z(x_0),z(x_1),\ldots,z(x_N))^\top\in\R^{N+1}$.
We will use the corresponding notation
\[
   \dual{\hat z}{\dz} = \dual{\gamma(z)}{\dz} := \dual{z}{\dz}\quad\forall\dz\in H^1(\cT)
\]
for $\hat z=\gamma(z)\in\R^{N+1}$, and
$\gamma(\bu):=(\gamma(u),\gamma(w),\gamma(\theta),\gamma(n),\gamma(q),\gamma(m))$
for $\bu=(u,w,\theta,n,q,m)\in H^1(I)^6$.

We introduce the following spaces and norms (slightly abusing notation),
\begin{align*}
   &\qquad U := U_0\times \hat U,\quad
   U_0 := \oplus_{j=1}^6 L_2(I),\quad \hat U:= \R^{6N+6},\\
   &\|(u,w,\theta,n,q,m)\|^2 := \|u\|^2 + \|w\|^2 + \|\theta\|^2 + \|n\|^2 + \|q\|^2 + \|m\|^2,\\
   &\|(\hat u,\hat w,\hat\theta,\hat n,\hat q,\hat m)\|_\gamma^2
    := \|\hat u\|_\gamma^2 + \|\hat w\|_\gamma^2 + \|\hat\theta\|_\gamma^2
     + \|\hat n\|_\gamma^2 + \|\hat q\|_\gamma^2 + \|\hat m\|_\gamma^2,\\
   &\|\bu\|_U^2 := \|\bu_0\|^2 + \|\hat\bu\|_\gamma^2
   \quad\text{for}\ \bu=(\bu_0,\hat\bu),\
   \bu=(u,w,\theta,n,q,m)\in U_0,\ \hat\bu=(\hat u,\hat w,\hat\theta,\hat n,\hat q,\hat m)\in \hat U,\\
   &\|\bv\|_V^2 := \|\du\|_{1,\cT}^2 + \|\dw\|_{1,\cT}^2 + \|\dtheta\|_{1,\cT}^2
                 + \|\dn\|_{1,\cT}^2 + \|\dq\|_{1,\cT}^2 + \|\dm\|_{1,\cT}^2,\quad
   \|z\|_{1,\cT} := \sum_{j=1}^N \|z|_{I_j}\|_{1,I_j}^2\\
   &\quad\text{with}\quad
   \|z|_{I_j}\|_{1,I_j}^2 := \sum_{j=1}^N \|z|_{I_j}\|_{I_j}^2 + \|(z|_{I_j})'\|_{I_j}^2\quad
   \text{for}\ \bv=(\du,\dw,\dtheta,\dn,\dq,\dm)\in V,\ z\in H^1(\cT).
\end{align*}
Here, $\|\cdot\|_{I_j}$, $\|\cdot\|$ denote the $L_2(I_j)$ and $L_2(I)$-norms, respectively, and
\begin{equation} \label{trace_norm}
   \|\hat z\|_\gamma^2 :=
   \sum_{j=1}^N \frac {h_j}3 \bigl(z(x_{j-1})^2 + z(x_{j-1})z(x_j) + z(x_j)^2\bigr)
              + \frac 1{h_j} \bigl(z(x_{j-1}) - z(x_j)\bigr)^2
\end{equation}
for $\hat z=\gamma(z)$, $z\in H^1(I)$. Induced by \eqref{VFa}, and considering independent traces,
we consider the bilinear and linear forms
\begin{align*}
   b(\bu,\bv)
   &:=
   \vdual{u}{\dn'+\lambda\dq}_\cT + \vdual{w}{\dq'-\lambda\dn}_\cT + \vdual{\theta}{\dm'+\dq}_\cT
   \\
   &\quad+ \vdual{n}{\epsilon^2\dn+\du'+\lambda\dw}_\cT
   + \vdual{q}{\mu\epsilon^2\dq+\dw'-\lambda\du-\dtheta}_\cT
   + \vdual{m}{\dm+\dtheta'}_\cT
   \\
   &\quad-\dual{\tu}{\dn} -\dual{\tw}{\dq} -\dual{\ttheta}{\dm}
   -\dual{\tn}{\du} -\dual{\tq}{\dw} -\dual{\tm}{\dtheta},
   \\
   L(\bv) &:= \vdual{f_u}{\du} +\vdual{f_w}{\dw}
\end{align*}
for $\bu=(\bu_0,\hat\bu)\in U$, $\bu_0=(u,w,\theta,n,q,m)$, $\hat\bu=(\tu,\tw,\ttheta,\tn,\tq,\tm)$,
and $\bv=(\du,\dw,\dtheta,\dn,\dq,\dm)\in V$.
We obtain the following ultra-weak formulation of \eqref{prob} with boundary conditions
``$lr$'' (cf.~\eqref{BC}, \eqref{BC_expl}): \emph{Find $\bu\in U_{lr}$ such that}
\begin{equation} \label{VF}
   b(\bu,\bv)=L(\bv)\quad\forall \bv\in V
\end{equation}
with
\[
   U_{lr} := U_0\times \hat U_{lr},\quad
   \hat U_{lr} := \gamma(U^c_{lr}).
\]
Note that $\dim(\hat U_{lr})=6N$.

The DPG method is defined in the standard way. Employing trial-to-test operator
\[
   \ttt:\;U\to V:\quad
   \ip{\ttt\bu}{\bv}_V = b(\bu,\bv)\quad\forall\bv\in V
\]
with canonical inner product $\ip{\cdot}{\cdot}_V$ in $V=H^1(\cT)^6$,
and selecting a finite-dimensional subspace $U_h\subset U_{lr}$, a DPG approximation
$\bu_h$ is given by solving
\begin{equation} \label{DPG}
   \bu_h\in U_h:\quad b(\bu_h,\bv) = L(\bv)\quad\forall\bv\in\ttt(U_{lr}).
\end{equation}
We will consider particular combinations of boundary conditions and corresponding choices
of parameters $\epsilon,\mu,\lambda$ to prove  the well-posedness of variational formulation
\eqref{VF} and convergence of scheme \eqref{DPG}. We formulate these choices as an assumption.

\begin{ass} \label{ass}
We assume that the boundary conditions and parameters $\epsilon>0$, $\mu\ge 0$, $\lambda\in (0,2\pi)$
are selected in such a way that the parameters are bounded and that the  number $\Cstab{}$ defined by
\begin{align*}
   &\Cstab{}=\max\left\{\min\{\epsilon^{-2},C_n(\lambda)\},
                      \min\{\mu^{-1}\epsilon^{-2},C_q(\lambda)C_n(\lambda)\},1 \right\}
   &&(lr\in\{cc,cp,pc,cs,sc,ps,sp\}),\\
   &\Cstab{}=\max \{\min\{\epsilon^{-2},C_q^{(0)}(\lambda)C_n(\lambda)\},1\} |\cos \lambda|^{-2}
   && (lr\in\{sd,ds\}),\\
   &\Cstab{}=\max\{\min\{\epsilon^{-2},C_q^{(0)}(\lambda)C_n(\lambda)\},1\}   && (lr\in\{cd,dc\}),\\
   &\Cstab{}=1   && (lr\in\{cr,rc,cf,fc,pr,rp\})
\end{align*}
is finite. Here,
\[
\begin{aligned}
	C_n(\lambda) := \lambda^2+ \bigl(\frac 1\lambda + \lambda\bigr)^2, \quad
	C_q(\lambda) := \frac {\lambda+|\sin \lambda|}{\lambda-|\sin\lambda|}, \quad
	C_q^{(0)}(\lambda) := \frac{2\lambda - \sin (2\lambda)}{2\lambda + \sin (2\lambda)}.
\end{aligned}
\]
\end{ass}
The functions $C_n(\lambda)$, $C_q(\lambda)$ and $C_q^{(0)}(\lambda)$ encode curvature-dependent stability effects and appear in the proof of Lemma~\ref{la_coercive}, see Figure~\ref{fig:stability}.
\begin{figure}[ht!]
	\centering
	\begin{tikzpicture}
		\begin{axis}[
			xmin=0,
			xmax=6,
			ymin=0,
			ymax=80,
			width=0.9\textwidth,
			height=0.55\textwidth,
			xlabel={$\lambda$},
			domain=0.1:2*pi,
			unbounded coords=jump,
			samples=900,
			grid=both,
			xtick={0,1,2,3,4,5,6},
			legend style={
				draw=none,
				fill=none,
				at={(0.5,-0.18)},
				anchor=north,
				legend columns=3,
				/tikz/every even column/.append style={column sep=8pt},
			},
			]
			
			\addplot[thick] {x^2 + (1/x + x)^2};
			\addlegendentry{$C_n(\lambda)=\lambda^2+\left(\frac{1}{\lambda}+\lambda\right)^2$}
			
			\addplot[thick, dashed]
			{(x + abs(sin(deg(x))))/(x - abs(sin(deg(x))))};
			\addlegendentry{$C_q(\lambda)=\dfrac{\lambda+|\sin\lambda|}{\lambda-|\sin\lambda|}$}
			
			\addplot[thick, dotted]
			{(2*x - sin(deg(2*x)))/(2*x + sin(deg(2*x)))};
			\addlegendentry{$C_q^0(\lambda)=\dfrac{2\lambda-\sin(2\lambda)}{2\lambda+\sin(2\lambda)}$}
			
		\end{axis}
	\end{tikzpicture}
	\caption{Stability constants as functions of the curvature parameter $\lambda$.}
	\label{fig:stability}
\end{figure}
\begin{theorem} \label{thm}
Let $f_u,f_w\in L_2(I)$ be given and assume that Assumption~\ref{ass} holds true.
Problem \eqref{VF} is well posed with solution $\bu\in U_{lr}$ bounded as
\begin{equation} \label{stab}
   \|\bu\|_U \lesssim \Cstab{}(\|f_u\| + \|f_w\|).
\end{equation}
Furthermore, DPG scheme \eqref{DPG} is well-posed and converges quasi-optimally,
\[
   \|\bu-\bu_h\|_U \lesssim \Cstab{} \inf\{\|\bu-\bw\|_U;\; \bw\in U_h\}.
\]
In both estimates, the hidden constant is independent of
$f_u,f_w,\epsilon,\mu,\lambda$, and mesh $\cT$, and $\Cstab{}$ is the number from
Assumption~\ref{ass}.
\end{theorem}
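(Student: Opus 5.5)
We follow the abstract framework for ultra-weak formulations from \cite{CarstensenDG_16_BSF}. Well-posedness of \eqref{VF} with bound \eqref{stab}, and quasi-optimality of the DPG scheme \eqref{DPG} (by the standard fact that DPG with optimal test functions is a minimum residual method), follow once we establish three ingredients with explicit constants. First, boundedness of $b$ on $U\times V$ uniformly in the mesh and the bounded parameters. Second, injectivity of the adjoint on $V$ restricted to test functions annihilating $\hat U_{lr}$. Third, an inf-sup bound $\sup_{\bv}b(\bu,\bv)/\|\bv\|_V\gtrsim \Cstab{}^{-1}\|\bu\|_U$ for $\bu\in U_{lr}$. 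By the splitting lemma of \cite{CarstensenDG_16_BSF}, the third ingredient reduces to two independent estimates. One is a lower bound for the trace operator, $\|\hat\bu\|_\gamma\lesssim\sup_{\bv\in V}\dual{\hat\bu}{\bv}/\|\bv\|_V$. The other is stability of the continuous adjoint problem: for every $\bv\in V_{lr}:=\{\bv\in H^1(I)^6:\ \dual{\hat\bu}{\bv}=0\ \forall\hat\bu\in\hat U_{lr}\}$, that is, conforming test functions with the dual boundary conditions, we need $\|\bv\|_V\lesssim\Cstab{}\|A^*\bv\|$. Here $A^*\bv$ collects the six $L_2$ expressions $\dn'+\lambda\dq$, \dots, $\dm+\dtheta'$ appearing in $b$.

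The trace bound is handled componentwise. For scalar $\hat z\in\R^{N+1}$, the quantity $\sup_{\dz\in H^1(\cT)}\dual{\hat z}{\dz}/\|\dz\|_{1,\cT}$ equals the minimal $H^1(I)$-norm of an extension of $\hat z$. On each element this minimal extension is given by $\cosh/\sinh$ functions. Comparing element by element with the piecewise-linear interpolant shows that the minimal norm is equivalent to \eqref{trace_norm}: the piecewise linear interpolant has squared norm exactly $\|\hat z\|_\gamma^2$, and the two are equivalent since $h_j\le1$. This follows \cite{FuehrerGH_21_LFD}, gives a mesh-independent constant, and settles the first two ingredients as well.

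The main work, and the expected obstacle, is the adjoint stability. Given data $g=A^*\bv\in L_2(I)^6$, the adjoint system is again an arch system with transposed structure. We split it into kinematic and static parts, following Loula et al.\ \cite{LoulaFHM_87_SCA}. In the equations $\dn'+\lambda\dq=g_1$, $\dq'-\lambda\dn=g_2$, $\dm'+\dq=g_3$, the static variables are determined by integration up to constants fixed by the boundary conditions. For statically determinate combinations ($cr$, $cf$, $pr$, \dots), these equations integrate directly, giving $\Cstab{}=1$. Otherwise, the constants must be controlled through the constitutive equations $\epsilon^2\dn+\du'+\lambda\dw=g_4$, $\mu\epsilon^2\dq+\dw'-\lambda\du-\dtheta=g_5$, $\dm+\dtheta'=g_6$, tested against suitable functions and combined with the kinematic boundary conditions. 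This yields two competing bounds. The first is the trivial bound $\|\dn\|\lesssim\epsilon^{-2}$ (resp.\ $\mu^{-1}\epsilon^{-2}$ for $\dq$), from energy testing. The second is a curvature bound obtained by solving the homogeneous system explicitly with $\sin\lambda x,\cos\lambda x$ and estimating the resulting $2\times2$ or $3\times3$ systems for the free constants. These determinants produce $C_n(\lambda)$, $C_q(\lambda)$ and $C_q^{(0)}(\lambda)$, and the factor $|\cos\lambda|^{-2}$ for $sd$. Taking the minimum of the two bounds per variable and then bounding the kinematic variables by integration gives $\Cstab{}$.

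I would carry this out case by case, grouping the boundary conditions by which constants remain free. The delicate parts are the explicit computation of the determinants and the uniformity in $\epsilon,\mu$.
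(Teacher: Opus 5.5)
\textbf{The adjoint stability estimate is announced but not proved, and the mechanism you sketch for it would not close as described.} Your skeleton is the paper's: you split the argument into a trace bound and conforming adjoint stability, you prove the trace lemma with elementwise minimal extensions compared against the piecewise linear interpolant (whose squared norm is exactly \eqref{trace_norm}), and you get quasi-optimality from the minimum-residual property. Two small points on the reduction. First, the trace lemma gives boundedness of $b$ but not injectivity; injectivity needs uniqueness for the homogeneous conforming adjoint problem. Second, the splitting lemma needs $\sup_{\bv\in V_{lr}}(\bu_0,A^*\bv)/\|\bv\|_V\gtrsim\Cstab{-1}\|\bu_0\|$, i.e.\ solvability of the adjoint problem for every $g\in L_2(I)^6$, not just the a priori bound $\|\bv\|_V\lesssim\Cstab{}\|A^*\bv\|$.

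The real gap is the adjoint stability itself. It carries all the dependence on $\epsilon,\mu,\lambda$. As sketched, ``two competing bounds, take the minimum per variable'' does not close. Test the constitutive equations with $(\dn,\dq,\dm)$ and the equilibrium equations with $(\du,\dw,\dtheta)$. This gives $\epsilon^2\|\dn\|^2+\mu\epsilon^2\|\dq\|^2+\|\dm\|^2=(g_4,\dn)+(g_5,\dq)+(g_6,\dm)+(g_1,\du)+(g_2,\dw)+(g_3,\dtheta)$. So even the ``trivial'' bound $\|\dn\|\lesssim\epsilon^{-2}\|g\|$ first needs an $H^1$ bound of $(\du,\dw,\dtheta)$ by $\|(g_4-\epsilon^2\dn,\,g_5-\mu\epsilon^2\dq,\,g_6-\dm)\|$. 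That is an inf-sup estimate for the kinematic operator $(\du,\dw,\dtheta)\mapsto(\du'+\lambda\dw,\,\dw'-\lambda\du-\dtheta,\,\dtheta')$ under the kinematic boundary conditions. It needs $\dtheta(0)\dtheta(1)=0$ (Poincar\'e). Its constant $\beta$ behaves like $|\cos\lambda|$ when $\du$ and $\dw$ vanish at opposite endpoints, because the system for the integration constants then has determinant $\pm\cos\lambda$. It enters as $\beta^{-2}$ because the estimate is used twice: once to close the energy identity and once to bound the displacements. So the kinematic estimate must come first, not last as in your ordering.

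Likewise, $C_n$, $C_q$, $C_q^{(0)}$ do not arise as determinants of systems for the free static constants. Writing $n,q,m$ for $\dn,\dq,\dm$, they are Rayleigh-quotient bounds on the space of homogeneous static solutions satisfying the static boundary conditions, i.e.\ on $\ker c$. With $n=n_0\cos\lambda x-q_0\sin\lambda x$ and $q=q_0\cos\lambda x+n_0\sin\lambda x$, the quadratic forms $\|n\|^2$ and $\|q\|^2$ in $(n_0,q_0)$ have eigenvalues $\tfrac12(1\pm|\sin\lambda|/\lambda)$. Hence $\|q\|^2\le C_q\|n\|^2$, and $\|q\|^2=C_q^{(0)}\|n\|^2$ if $q_0=0$. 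The relation $n=n(0)+\lambda(m-m(0))$ yields $\|n\|^2\le C_n\|m\|^2$. By themselves these bound nothing; they matter only because the energy $a(n,q,m;n,q,m)=\epsilon^2\|n\|^2+\mu\epsilon^2\|q\|^2+\|m\|^2$ controls $\|m\|^2$.

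The missing organizing idea is the saddle-point structure of the adjoint problem:
\begin{itemize}
\item write it in mixed form with this compliance form $a$ and the kinematic form $c$;
\item prove coercivity of $a$ on $\ker c$ with $\alpha_n^{-1}=\min\{\epsilon^{-2},C_n\}$ and $\alpha_q^{-1}=\min\{\mu^{-1}\epsilon^{-2},C_qC_n\}$ (this is your ``minimum'', but it is a single coercivity constant rather than two separate estimates);
\item prove the inf-sup condition for $c$ with $\beta$ as above;
\item apply Babu\v{s}ka--Brezzi, which gives existence and the constant $\max\{\alpha^{-1},\alpha^{-1/2}\beta^{-1},\beta^{-2}\}$.
\end{itemize}
This is how the paper obtains $\Cstab{}$. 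The $H^1$ bounds for $n,q,m$ then follow from the equilibrium equations. If you instead solve for the constants directly, the resulting matrices depend on $\epsilon,\mu$; for $cc$ the $3\times3$ system is exactly the Gram matrix of $a$ on $\ker c$. Proving their uniform invertibility is therefore the kernel coercivity, and without it the uniformity in $\epsilon,\mu$ that you flag as delicate is not established.
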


A proof is given in the next section.

\section{Proofs} \label{sec_pf}

We start with characterizing the trace norm of $H^1(I)$. Afterwards we study the adjoint problem
to \eqref{prob}. We finish with a proof of Theorem~\ref{thm}.

\begin{lemma} \label{la_trace}
The norms
\[
   \|\hat z\|_{-1,\cT}:=
   \sup_{0\not=\dz\in H^1(\cT)}
   \frac{\dual{\hat z}{\dz}}{\|\dz\|_{1,\cT}} \quad \text{and} \quad \|\hat z\|_\gamma
\]
are uniformly equivalent in $\R^{N+1}$ with respect to $N$ and $\cT$.
\end{lemma}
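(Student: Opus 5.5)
The plan is to show that both norms are equivalent to the minimal $H^1(I)$-extension norm
\[
   \|\hat z\|_{\mathrm{ext}} := \inf\{\|z\|_1;\; z\in H^1(I),\ \gamma(z)=\hat z\}.
\]
Here $\|\cdot\|_1$ is the $H^1(I)$-norm, and it coincides with $\|\cdot\|_{1,\cT}$ on $H^1(I)$. The second half of the argument reduces this minimal extension norm to a local computation on each element.

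\emph{Step 1: $\|\hat z\|_{-1,\cT}=\|\hat z\|_{\mathrm{ext}}$.} For any extension $z\in H^1(I)$ of $\hat z$, integrating by parts on each element gives
\[
   \dual{\hat z}{\dz}=\sum_{j=1}^N\int_{I_j}\bigl(z\,\dz'+z'\dz\bigr)
   \le\|z\|_1\|\dz\|_{1,\cT}.
\]
Taking the infimum over extensions yields $\|\hat z\|_{-1,\cT}\le\|\hat z\|_{\mathrm{ext}}$. For the reverse inequality, take $z^*$ to be the elementwise minimal-energy extension: on each $I_j$ it solves $-z''+z=0$ with the prescribed nodal values. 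It is continuous because it matches the nodal data, so $z^*\in H^1(I)$. By the elementwise minimality, $z^*$ realizes the infimum defining $\|\hat z\|_{\mathrm{ext}}$. Choosing $\dz=z^*$ in the pairing gives $\dual{\hat z}{z^*}=\|z^*\|_1^2$. Hence $\|\hat z\|_{-1,\cT}\ge\|z^*\|_1=\|\hat z\|_{\mathrm{ext}}$.

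\emph{Step 2: equivalence with $\|\cdot\|_\gamma$.} Let $z_\ell$ be the piecewise linear interpolant of the nodal values. A direct computation on each element gives $\|z_\ell\|_{1,I_j}^2=\frac{h_j}3(a^2+ab+b^2)+\frac1{h_j}(a-b)^2$, where $a,b$ are the endpoint values. Thus $\|z_\ell\|_1=\|\hat z\|_\gamma$, and therefore $\|\hat z\|_{\mathrm{ext}}\le\|\hat z\|_\gamma$.

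For the lower bound, fix any extension $v\in H^1(I)$ and an element $I_j$ with endpoint values $a,b$. Cauchy--Schwarz gives $\|v'\|_{I_j}^2\ge (a-b)^2/h_j$. The one-dimensional scaled trace inequality $|v(x)|^2\le \frac2{h_j}\|v\|_{I_j}^2+2h_j\|v'\|_{I_j}^2$ then yields
\[
   h_j(a^2+b^2)\lesssim\|v\|_{I_j}^2+h_j^2\|v'\|_{I_j}^2\le\|v\|_{1,I_j}^2,
\]
where the last step uses $h_j\le 1$ because $I=(0,1)$. Since $\frac12(a^2+b^2)\le a^2+ab+b^2\le\frac32(a^2+b^2)$, summing over $j$ gives $\|\hat z\|_\gamma\lesssim\|v\|_1$. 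Taking the infimum over extensions $v$ gives $\|\hat z\|_\gamma\lesssim\|\hat z\|_{\mathrm{ext}}$.

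\emph{Where care is needed.} The main point to watch is uniformity in the mesh. This is guaranteed by the scaled trace inequality together with $h_j\le1$; the latter absorbs the $h_j^2\|v'\|_{I_j}^2$ term into the full $H^1$-norm without any mesh-dependent constants. The constants obtained are absolute, so they are independent of $N$ and $\cT$.
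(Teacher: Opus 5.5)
\paragraph{Gap in Step 1.} The reverse inequality in Step 1 rests on the identity $\dual{\hat z}{z^*}=\|z^*\|_1^2$, and that identity is false. The pairing tests the nodal values against the \emph{jumps} of $\dz$. Since $z^*\in H^1(I)$ is continuous, the sum telescopes:
\[
   \dual{\hat z}{z^*}=\sum_{j=1}^N\bigl(z^*(x_j)^2-z^*(x_{j-1})^2\bigr)=z^*(1)^2-z^*(0)^2 .
\]
This vanishes, for instance, for constant nodal data, even though $\|z^*\|_1>0$. Equivalently, in your integrated form $\sum_j\int_{I_j}(z\,\dz'+z'\dz)$, taking $\dz=z^*$ gives $\int_{I_j}\bigl((z^*)^2\bigr)'$ on each element rather than $\int_{I_j}\bigl((z^*)^2+((z^*)')^2\bigr)$. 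So you have not shown $\|\hat z\|_{\mathrm{ext}}\le\|\hat z\|_{-1,\cT}$. Consequently $\|\hat z\|_\gamma\lesssim\|\hat z\|_{-1,\cT}$ is not established either, and this is exactly the direction needed later for \eqref{infsup2}.

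\paragraph{The fix.} Use the elementwise derivative $\dz|_{I_j}:=(z^*|_{I_j})'$ as the test function. It is discontinuous at the nodes, which is precisely why the broken space $H^1(\cT)$ is needed. Since $(z^*)''=z^*$ on each element,
\[
   \dual{\hat z}{\dz}=\sum_{j=1}^N\int_{I_j}\bigl(z^*(z^*)''+((z^*)')^2\bigr)=\|z^*\|_1^2 .
\]
In the same way,
\[
   \|\dz\|_{1,\cT}^2=\sum_{j=1}^N\bigl(\|(z^*)'\|_{I_j}^2+\|(z^*)''\|_{I_j}^2\bigr)=\|z^*\|_1^2 ,
\]
hence $\|\hat z\|_{-1,\cT}\ge\|z^*\|_1=\|\hat z\|_{\mathrm{ext}}$.

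\paragraph{Comparison with the paper.} With this repair your argument is complete and follows the paper's outline: identify $\|\cdot\|_{-1,\cT}$ with the minimal-extension norm (the paper quotes this from the literature), then localize to elements. Your Step 2 is correct and differs from the paper there. You get the upper bound from the linear interpolant and the lower bound from Cauchy--Schwarz plus a scaled trace inequality with $h_j\le 1$. This replaces the paper's appeal to the uniform equivalence between the $-z''+z=0$ minimizer and the linear interpolant. It is self-contained and makes explicit that uniformity only requires bounded element sizes.
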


\begin{proof}
We follow similar steps as in the proof of \cite[Lemma~3]{FuehrerGH_21_LFD} where a higher order
case was considered.
Let $\hat z\in \R^{N+1}$ be given. By standard estimates,
cf.~\cite[Theorem~2.3]{CarstensenDG_16_BSF}, \cite[Lemma~3]{FuehrerGH_21_LFD}, one finds that
\[
   \|\hat z\|_{-1,\cT} = \inf\{\|z\|_{1,\cT};\; z\in H^1(I),\ \gamma(z)=\hat z\}.
\]
The infimum norm on the right-hand side is localizable with respect to the elements.
We therefore characterize, for $(z_0,z_1)\in\R^2$ and $I_h:=(0,h)$ with $h>0$,
the trace norm of $H^1(I_h)$ as
\[
   \|(z_0,z_1)\|_h
   := \inf\left\{\|z\|_{1,I_h}:=\bigl(\|z\|_{I_h}^2+\|z'\|_{I_h}^2\bigr)^{1/2};\;
            z\in H^1(I_h),\ z(0)=z_0,\ z(h)=z_1\right\}.
\]
The latter infimum is obtained by solving $-z''+z=0$ in $I_h$ subject to $z(0)=z_0$ and $z(h)=z_1$.
As in \cite[Lemma~3]{FuehrerGH_21_LFD} one sees that the $H^1(I_h)$-norm of this function
is uniformly equivalent to the norm of $\tilde z$ where
$\tilde z''=0$ and $\tilde z(0)=z_0$, $\tilde z(h)=z_1$: $\tilde z(x)=z_0+(z_1-z_0)x/h$. Calculating
\[
   \|\tilde z\|_{1,I_h}^2 = \frac h3\begin{pmatrix}z_0\\z_1\end{pmatrix}^\top
                   \begin{pmatrix}1 & 1/2\\ 1/2 & 1\end{pmatrix}\begin{pmatrix}z_0\\z_1\end{pmatrix}
                 + \frac 1h\begin{pmatrix}z_0\\z_1\end{pmatrix}^\top
                   \begin{pmatrix}1 & -1\\ -1 & 1\end{pmatrix}\begin{pmatrix}z_0\\z_1\end{pmatrix},
\]
and recalling definition \eqref{trace_norm} of $\|\cdot\|_\gamma$, the statement follows.
\end{proof}

\subsection{Analysis of the adjoint problem} \label{sec_adj}

Bilinear form $b(\cdot,\cdot)$ reveals the continuous adjoint problem of \eqref{prob}:
\emph{Given $g_u,g_w,g_\theta,g_n,g_q,g_m\in L_2(I)$, find $(u,w,\theta,n,q,m)\in U^c_{lr}$ with}
\begin{subequations} \label{adj}
\begin{alignat}{4}
   \epsilon^2 n+u'+\lambda w &= g_n,  \quad &\mu\epsilon^2 q+w'-\lambda u -\theta &= g_q,
   &\quad m+\theta' &= g_m,
   \label{a1}\\
   n'+\lambda q  &= g_u,\quad &q' - \lambda n &= g_w, &\quad m'+q &= g_\theta. \label{a2}
\end{alignat}
\end{subequations}
We analyze problem \eqref{adj} in a mixed setting, as studied by Loula \emph{et al.}~\cite{LoulaFHM_87_SCA}.

We recall notation $U^c_{lr}=U^{1,c}_{lr}\times U^{2,c}_{lr}$ for the spaces of
kinematic and non-kinematic variables. Introducing the bilinear forms
\begin{align*}
   a(n,q,m;\dn,\dq,\dm) &:=
   \epsilon^2 \vdual{n}{\dn} + \mu\epsilon^2 \vdual{q}{\dq} + \vdual{m}{\dm},\\
   c(n,q,m;\du,\dw,\dtheta) &:=
   \vdual{n}{\du'+\lambda \dw} + \vdual{q}{\dw'-\lambda \du-\dtheta} + \vdual{m}{\dtheta'},
\end{align*}
problem~\eqref{adj} has the mixed formulation:
\emph{Find $(n,q,m)\in L_2(I)^3$ and $(u,w,\theta)\in U^{1,c}_{lr}$ such that}
\begin{subequations} \label{adj_mixed}
\begin{alignat}{2}
   &a(n,q,m;\dn,\dq,\dm) + c(\dn,\dq,\dm;u,w,\theta)
   &&= \vdual{g_n}{\dn} + \vdual{g_q}{\dq} + \vdual{g_m}{\dm},\\
   &c(n,q,m;\du,\dw,\dtheta) &&= -\vdual{g_u}{\du} - \vdual{g_w}{\dw} - \vdual{g_\theta}{\dtheta}
\end{alignat}
\end{subequations}
\emph{for any $(\dn,\dq,\dm)\in L_2(I)^3$ and $(\du,\dw,\dtheta)\in U^{1,c}_{lr}$.}

We prove the well-posedness of \eqref{adj_mixed} in Proposition~\ref{prop_adj} below.
To this end we introduce the kernel of bilinear form $c(\cdot,\cdot)$,
\begin{equation} \label{K}
   K_{lr} :=
   \{(n,q,m)\in L_2(I)^3;\; c(n,q,m;\du,\dw,\dtheta)=0
   \ \forall (\du,\dw,\dtheta)\in U^{1,c}_{lr}\}.
\end{equation}

\begin{lemma} \label{la_coercive}
Bilinear form $a(\cdot,\cdot)$ is coercive on $K_{lr}$:
\[
   a(n,q,m;n,q,m) \gtrsim
   \alpha_n\|n\|^2 + \alpha_q\|q\|^2 + \|m\|^2
   \quad\forall (n,q,m)\in K_{lr}
\]
holds true with a hidden constant that is independent of $\epsilon, \mu,\lambda$, where
\begin{align*}
   &\alpha_n=\max\{\epsilon^2,C_n(\lambda)^{-1}\},\ \alpha_q=\max\{\mu\epsilon^2,C_q(\lambda)^{-1}C_n(\lambda)^{-1}\}
                                                          &&\text{for any boundary condition},\\
   &\alpha_n=\max\{\epsilon^2,C_n(\lambda)^{-1}\},\ \alpha_q= C_q^{(0)}(\lambda)^{-1}C_n(\lambda)^{-1}  && \text{if}\ q(0)q(1)=0,\\
   &\alpha_n=\alpha_q=1 && \text{if}\ n(0)=q(0)=0\ \text{or}\ n(1)=q(1)=0.
\end{align*}
Here $C_n(\lambda)$, $C_q(\lambda)$ and $C_q^{(0)}(\lambda)$ are the curvature-dependent stability constants introduced in Assumption \ref{ass} and worked out explicitly in the proof below.
\end{lemma}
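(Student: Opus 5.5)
The plan is to make the kernel $K_{lr}$ completely explicit and then compare finitely many quadratic forms. The $\epsilon^2$ and $\mu\epsilon^2$ entries of the maxima in $\alpha_n,\alpha_q$ are trivial, since $a(n,q,m;n,q,m)=\epsilon^2\|n\|^2+\mu\epsilon^2\|q\|^2+\|m\|^2$ already contains them. So it suffices to prove the curvature bounds $\|n\|^2\lesssim C_n(\lambda)\|m\|^2$ and $\|q\|^2\lesssim C_q(\lambda)C_n(\lambda)\|m\|^2$ on $K_{lr}$, and their refinements. Once these hold, $a\ge\|m\|^2\gtrsim \alpha_n\|n\|^2+\alpha_q\|q\|^2+\|m\|^2$ follows by adding the pieces.

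\emph{Step 1: characterize $K_{lr}$.} I test the defining relation in \eqref{K} first with $(\du,\dw,\dtheta)\in H^1_0(I)^3$, which lie in $U^{1,c}_{lr}$ for every boundary condition. This gives, in the distributional sense,
\[
   n'+\lambda q=0,\qquad q'-\lambda n=0,\qquad m'+q=0 .
\]
Hence $(n,q,m)$ is smooth and
\[
   n=\alpha\cos\lambda x+\beta\sin\lambda x,\qquad q=\alpha\sin\lambda x-\beta\cos\lambda x,\qquad m=\tfrac1\lambda(\alpha\cos\lambda x+\beta\sin\lambda x)+\gamma
\]
for some $(\alpha,\beta,\gamma)\in\R^3$. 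Testing next with general elements of $U^{1,c}_{lr}$ and integrating by parts yields the natural boundary conditions on $n,q,m$ at the endpoints where the corresponding kinematic variable is free. For example, $n(0)=q(0)=0$ whenever $u,w$ are unconstrained at $0$. I will treat these conditions only as linear constraints on $(\alpha,\beta,\gamma)$.

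\emph{Step 2: bound $n$ by $m$ and $q$ by $n$.} Note that $m-\gamma=n/\lambda$. I expand $\|m\|^2=\|n\|^2/\lambda^2+2\gamma\lambda^{-1}\int_I n+\gamma^2$ and use $|\int_I n|\le\|n\|$. The target is $\|n\|^2\le C_n(\lambda)\|m\|^2$ with $C_n=\lambda^2+(\lambda^{-1}+\lambda)^2$. I expect this to follow by choosing Young's weights carefully, or equivalently by the identity $\int_I n=(q(1)-q(0))/\lambda$ combined with the Gram structure. This minimization over $\gamma$ is the step I expect to be most delicate: the constant must be uniform for $\lambda\in(0,2\pi)$ without dividing by $1-\sin\lambda/\lambda$. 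If the direct Young argument is lossy, I will instead compute the $3\times3$ Gram matrix of $(\cos\lambda x,\sin\lambda x,1)$ explicitly and bound its smallest generalized eigenvalue against the $n$-Gram matrix.

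For $q$, the Gram matrix $G$ of $\{\cos\lambda x,\sin\lambda x\}$ in $L_2(I)$ has eigenvalues $\tfrac12(1\pm|\sin\lambda|/\lambda)$. Moreover $\|n\|^2$ and $\|q\|^2$ are the quadratic forms $(\alpha,\beta)G(\alpha,\beta)^\top$ and $(\alpha,-\beta)\tilde G(\alpha,-\beta)^\top$, where $\tilde G$ is the Gram matrix with the roles of $\cos$ and $\sin$ interchanged, which has the same spectrum. Therefore $\|q\|^2\le\frac{\lambda+|\sin\lambda|}{\lambda-|\sin\lambda|}\|n\|^2=C_q(\lambda)\|n\|^2$, which gives the first line of the lemma.

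\emph{Step 3: the refined cases.} If $q(0)=0$, then $\beta=0$, so $\|q\|^2=\alpha^2\bigl(\tfrac12-\tfrac{\sin2\lambda}{4\lambda}\bigr)$ and $\|n\|^2=\alpha^2\bigl(\tfrac12+\tfrac{\sin2\lambda}{4\lambda}\bigr)$. Their ratio is exactly $C_q^{(0)}(\lambda)$. The case $q(1)=0$ is analogous after the reflection $x\mapsto1-x$. This gives the second line without the $\mu\epsilon^2$ alternative, as stated.

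Finally, suppose $n(0)=q(0)=0$. Then $\alpha=0$ from $n(0)=\alpha$ and $\beta=0$ from $q(0)=-\beta$, so $n=q=0$ and $m$ is constant. Hence $a=\|m\|^2=\|n\|^2+\|q\|^2+\|m\|^2$, which gives $\alpha_n=\alpha_q=1$. The endpoint $x=1$ is handled identically by rotation invariance.
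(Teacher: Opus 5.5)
Your Step 1, the bound $\|q\|^2\le C_q(\lambda)\|n\|^2$ via the Gram matrix with eigenvalues $\frac12(1\pm|\sin\lambda|/\lambda)$, the exact ratio $C_q^{(0)}(\lambda)$ when $q(0)=0$, and the case $n(0)=q(0)=0$ are all correct and are essentially the paper's argument. The gap is the estimate $\|n\|^2\lesssim C_n(\lambda)\|m\|^2$. You only announce it, yet the first and second lines of the lemma both rest on it, and it is where all the difficulty lies.

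Your Young-type route cannot work. Minimizing your expansion over $\gamma$ gives $\min_\gamma\|m\|^2=\lambda^{-2}\|n-\bar n\|^2$ with $\bar n=\int_I n$. So the estimate is equivalent to the quantitative non-constancy statement $\|n-\bar n\|^2\gtrsim\lambda^2C_n(\lambda)^{-1}\|n\|^2$, i.e.\ $\gtrsim\lambda^4\|n\|^2$ as $\lambda\to0$, for every $n\in\mathrm{span}\{\cos\lambda x,\sin\lambda x\}$. Cauchy--Schwarz $|\bar n|\le\|n\|$ with any Young weights only gives the lower bound $0$, and the identity $\bar n=(q(1)-q(0))/\lambda$ adds nothing by itself.

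Moreover, the sharp form you name as the target ($\le$ with constant $1$) is false. Take $n=\cos\lambda x+\frac{\lambda}{2}\sin\lambda x$ and $\gamma=-\bar n/\lambda$. Then $n=1+\frac{\lambda^2}{2}(x-x^2)+O(\lambda^4)$, so $\|n-\bar n\|^2=\lambda^4/720+O(\lambda^6)$ and $\|n\|^2/\|m\|^2\approx720\lambda^{-2}\approx720\,C_n(\lambda)$. At $\lambda=0.1$ the ratio is about $7\cdot10^4$, while $C_n(0.1)\approx102$.

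The paper's \eqref{est_nm} asserts exactly this constant-$1$ bound, and its argument does not supply the missing step either. Evaluating the form in $(\bar n,n_0,\bar m,m_0)$ along the positive eigenvector bounds nothing, because for fixed $\bar m$ that form is unbounded over free $(\bar n,n_0,m_0)$. Under the actual constraints on $K_{cc}$ it equals $\bar n^2-\lambda^2\bar m^2$, so the identity merely says $\|n-\bar n\|^2=\lambda^2\|m-\bar m\|^2$. Only the $\lesssim$ version holds, which is all the lemma needs.

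One clean way to close the gap starts from $m''=-q'=-\lambda n$, so $\|n\|=\lambda^{-1}\|m''\|$. It then suffices to prove the inverse estimate $\|m''\|\lesssim\|m\|$ on $\mathrm{span}\{1,\cos\lambda x,\sin\lambda x\}$, uniformly for $\lambda\in(0,2\pi)$. That yields $\|n\|^2\lesssim\lambda^{-2}\|m\|^2\le C_n(\lambda)\|m\|^2$.

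For the inverse estimate, use the basis $1$, $\sin(\lambda x)/\lambda$, $(1-\cos\lambda x)/\lambda^2$. It depends continuously on $\lambda\in[0,2\pi]$ in $C^2[0,1]$, with limit $1,x,x^2/2$ at $\lambda=0$, and it is linearly independent for every such $\lambda$. By compactness, its $L_2$ Gram matrix is uniformly positive definite and the Gram matrix of second derivatives is uniformly bounded.

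Your fallback with the raw basis $\cos\lambda x,\sin\lambda x,1$ is the same idea. However, that Gram matrix degenerates as $\lambda\to0$, which is exactly the delicacy you flagged and why the renormalized basis is needed.
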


\begin{proof}
For any boundary condition, $K_{lr}$ is a subspace (possibly adding boundary conditions) of
\[
   K_{cc} = 
   \{(n,q,m)\in L_2(I)^3;\; n'+\lambda q=0,\ q'-\lambda n=0,\ m'+q=0\}.
\]
Here, derivatives are taken in the distributional sense, and imply that $K_{cc}\subset H^1(I)^3$.

Now let $(n,q,m)\in K_{cc}$ be given. Combining relations $n'+\lambda q=0$ and $q'-\lambda n=0$ we obtain
$q''=-\lambda^2 q$ so that $q(x)=A \cos(\lambda x)+B \sin(\lambda x)$ with $A,B\in\R$.
Then, $n(x)=q'(x)/\lambda=-A\sin(\lambda x)+B\cos(\lambda x)$. Furthermore,
$n(x)=n(0)-\lambda\int_0^x q=n(0)-B-A\sin(\lambda x)+B\cos(\lambda x)$, and $B=n(0)$, $A=q(0)$, i.e.,
\begin{equation} \label{rel_nq}
   n(x) = n_0 \cos(\lambda x)-q_0\sin(\lambda x),\quad
   q(x) = q_0 \cos(\lambda x)+n_0\sin(\lambda x)
\end{equation}
with $n_0:=n(0)$ and $q_0:=q(0)$. We calculate the $L_2(I)$-norms of $n$ and $q$,
\begin{subequations} \label{norms_nq}
\begin{align}
   \|n\|^2 &= \frac{n_0^2}2 \Bigl(1+\frac{\sin(2\lambda)}{2\lambda}\Bigr)
            + \frac{q_0^2}2 \Bigl(1-\frac{\sin(2\lambda)}{2\lambda}\Bigr)
            - n_0q_0 \frac{\sin^2\lambda}\lambda
            = \frac 12 (n_0, q_0) \Lambda_n (n_0, q_0)^\top,\\
   \|q\|^2 &= \frac{n_0^2}2 \Bigl(1-\frac{\sin(2\lambda)}{2\lambda}\Bigr)
            + \frac{q_0^2}2 \Bigl(1+\frac{\sin(2\lambda)}{2\lambda}\Bigr)
            + n_0q_0 \frac{\sin^2\lambda}\lambda
            = \frac 12 (n_0, q_0) \Lambda_q (n_0, q_0)^\top
\end{align}
\end{subequations}
where
\[
   \Lambda_n = \begin{pmatrix} 1+\frac{\sin(2\lambda)}{2\lambda} & -\frac{\sin^2\lambda}\lambda\\
                                -\frac{\sin^2\lambda}\lambda & 1-\frac{\sin(2\lambda)}{2\lambda}
                \end{pmatrix},\qquad
   \Lambda_q = \begin{pmatrix} 1-\frac{\sin(2\lambda)}{2\lambda} & \frac{\sin^2\lambda}\lambda\\
                                \frac{\sin^2\lambda}\lambda & 1+\frac{\sin(2\lambda)}{2\lambda}
                \end{pmatrix}
             = \begin{pmatrix} 0 & -1\\ 1 & 0\end{pmatrix} \Lambda_n
               \begin{pmatrix} 0 & 1\\ -1 & 0\end{pmatrix}.
\]
The eigenvalues of both matrices are
\[
     1\pm\frac 1{2\lambda}\sqrt{\sin^2(2\lambda)+4\sin^4\lambda}
   = 1\pm\frac {\sin\lambda}{\lambda}>0.
\]
It follows that
\begin{align} \label{est_qn}
   \|q\|^2 \le C_q(\lambda) \|n\|^2
\end{align}
with
\[
C_q(\lambda) = \frac {\lambda+|\sin\lambda|}{\lambda-|\sin\lambda|}.
\]
Next we bound $\|n\|$ by $\|m\|$. We use relations $m'=-q=n'/\lambda$ to calculate
$m(x) = m(0) + (n(x)-n(0))/\lambda$, that is, $n(x)=n(0)+\lambda(m(x)-m(0))$.
Testing the last relation with $m$ and $n$, and denoting $m_0:=m(0)$,
$\bar n:=\vdual{n}{1}$, $\bar m:=\vdual{m}{1}$, we find that
\begin{align} \label{rel_nm}
   \vdual{n}{m} &= n_0\bar m+\lambda\|m\|^2 -\lambda m_0\bar m,\nonumber\\
   \|n\|^2 &= n_0\bar n+\lambda \vdual{m}{n}-\lambda m_0\bar n
            = n_0\bar n+\lambda n_0\bar m+\lambda^2\|m\|^2 -\lambda^2 m_0\bar m-\lambda m_0\bar n,
\end{align}
that is,
\[
   \|n\|^2 = \lambda^2 \|m\|^2
           + \frac 12 (\bar n, n_0, \bar m, m_0) \Lambda (\bar n, n_0, \bar m, m_0)^\top
\]
with
\[
   \Lambda := \begin{pmatrix} 0 & 1 & 0 & -\lambda\\ 1 & 0 & \lambda & 0\\
                              0 & \lambda & 0 & -\lambda^2\\ -\lambda & 0 & -\lambda^2 & 0
              \end{pmatrix}.
\]
This matrix has the eigenvalues $0$ with multiplicity $2$, and $\pm(1+\lambda^2)$.
The eigenspace of the only positive eigenvalue is generated by $(1,1,\lambda,-\lambda)^\top$.
Therefore, bounding $|\bar m|=|\vdual{m}{1}|\le \|m\|$,
\begin{align} \label{est_nm}
   \|n\|^2
   \le
   \lambda^2\|m\|^2 + \frac {1+\lambda^2}2 \bigl( \frac 2{\lambda^2} + 2\bigr) \bar m^2
   \le C_n(\lambda) \|m\|^2,
\end{align}
where the first upper bound is obtained for
$(\bar n,n_0,\bar m,m_0)^\top=\bar m (\lambda^{-1},\lambda^{-1},1,-1)^\top$
and
\[
C_n(\lambda) = \lambda^2+ \bigl(\frac 1\lambda + \lambda\bigr)^2.
\]
Next, since 
$a(n,q,m;n,q,m)=\epsilon^2\|n\|^2+\mu\epsilon^2\|q\|^2+\|m\|^2$ for any $(n,q,m)\in L_2(I)^3$,
the first general statement follows. In the following we consider specific boundary conditions.

{\bf Case $q(0)q(1)=0$.}
If $q(0)=0$, relations \eqref{norms_nq} give $\|q\|^2 = C_q^{(0)}(\lambda) \|n\|^2$. Together with bound \eqref{est_nm} we find that $\|q\|^2\lesssim C_q^{(0)}(\lambda)C_n(\lambda) \|m\|^2$.
Furthermore, $\epsilon^2\|n\|^2$ is controlled by the bilinear form.
For $q(1)=0$ instead of $q(0)=0$ the result follows analogously.

{\bf Case $n(0)=q(0)=0$ or $n(1)=q(1)=0$.}
If $n(0)=q(0)=0$ then representation \eqref{rel_nq} shows that $n=q=0$.
It follows the trivial bound $a(n,q,m;n,q,m)\ge \|n\|^2+\|q\|^2+\|m\|^2$.
The case $n(1)=q(1)=0$ holds by symmetry.

\end{proof}

\begin{lemma} \label{la_infsup}
Bilinear form $c(\cdot,\cdot)$ satisfies the inf-sup condition
\begin{equation} \label{c_infsup}
   \sup_{0\not=(\dn,\dq,\dm)\in L_2(I)^3}
   \frac{c(\dn,\dq,\dm;u,w,\theta)}{\bigl(\|\dn\|^2+\|\dq\|^2+\|\dm\|^2\bigr)^{1/2}}
   \ge \beta \bigl(\|u\|_1^2+\|w\|_1^2+\|\theta\|_1^2\bigr)^{1/2}
   \quad\forall (u,w,\theta)\in H^1(I)^3
\end{equation}
uniformly for $\lambda\in(0,2\pi)$ if $\theta(0)\theta(1)=0$ with
\begin{align*}
   &\beta(\lambda)=O(1)              &&\text{if}\ u(0)=w(0)=0\ \text{or}\ u(1)=w(1)=0,\\
   &\beta(\lambda)=O\left(|\cos \lambda|\right)
                          && \text{if}\ u(0)=w(1)=0\ \text{or}\ u(1)=w(0)=0.
\end{align*}
\end{lemma}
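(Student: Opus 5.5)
Since the sup over $(\dn,\dq,\dm)\in L_2(I)^3$ of $c(\dn,\dq,\dm;u,w,\theta)$ divided by the $L_2$-norm equals the $L_2$-norm of the strain vector $(u'+\lambda w,\ w'-\lambda u-\theta,\ \theta')$, the inf-sup condition \eqref{c_infsup} is equivalent to the Korn-type estimate
\[
   \|u\|_1^2+\|w\|_1^2+\|\theta\|_1^2 \le \beta^{-2}\bigl(\|e\|^2+\|g\|^2+\|k\|^2\bigr),\qquad
   e:=u'+\lambda w,\ g:=w'-\lambda u-\theta,\ k:=\theta'.
\]
I will prove this by treating the map $(u,w,\theta)\mapsto(e,g,k)$ as a first-order ODE system and solving it explicitly. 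First, since $\theta(0)\theta(1)=0$, say $\theta(x_*)=0$ with $x_*\in\{0,1\}$, we have $\theta(x)=\int_{x_*}^x k$. This gives $\|\theta\|_1\lesssim\|k\|$ uniformly, and consequently $\|g+\theta\|\lesssim\|g\|+\|k\|$.

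Second, I set $\tilde g:=g+\theta$ and write the remaining system as $u'=-\lambda w+e$, $w'=\lambda u+\tilde g$. In complex form, $z:=u+iw$ satisfies $z'=i\lambda z+(e+i\tilde g)$. Hence $z(x)=e^{i\lambda x}z(0)+\int_0^x e^{i\lambda(x-s)}(e+i\tilde g)(s)\,ds$, so the homogeneous part is a rotation of the boundary data. The Volterra term is bounded in $L_2$ by $\|e\|+\|\tilde g\|$ uniformly in $\lambda\in(0,2\pi)$, since the kernel has modulus one on the unit interval. The task therefore reduces to bounding the initial value $z(0)=(u_0,w_0)$ by the data.

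There are two cases for the boundary conditions. If $u(0)=w(0)=0$, then $z(0)=0$ and we are done with $\beta=O(1)$; the case $u(1)=w(1)=0$ is symmetric, integrating from $x=1$. If instead $u(0)=w(1)=0$, then $z(0)=i w_0$. Taking the imaginary part of the representation at $x=1$ gives
\[
   0=w(1)=w_0\cos\lambda+\mathrm{Im}\int_0^1 e^{i\lambda(1-s)}(e+i\tilde g)\,ds .
\]
It follows that $|w_0|\,|\cos\lambda|\lesssim\|e\|+\|\tilde g\|$. This produces exactly the factor $|\cos\lambda|$ in $\beta$, and the case $u(1)=w(0)=0$ is symmetric.

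Once $z(0)$ is controlled, $\|u\|+\|w\|$ is bounded. The derivatives then follow from the equations themselves: $\|u'\|\le\|e\|+\lambda\|w\|$ and $\|w'\|\le\|\tilde g\|+\lambda\|u\|$, with $\lambda<2\pi$ bounded. The main obstacle is the mixed case: the $\theta$-coupling must not contaminate the constant, and one must verify that the sharp degeneration is precisely $|\cos\lambda|$ rather than something worse. To be safe, I would do the computation above carefully, checking that the real part of the identity at $x=1$ gives no additional constraint.
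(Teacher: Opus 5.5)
Your proposal is correct and follows essentially the paper's route. Both arguments use Poincaré for $\theta$ from $\theta(0)\theta(1)=0$ and a variation-of-constants solution of the $(u,w)$ system with $\tilde g=g+\theta$. Your complex form $z=u+iw$ is a compact rewriting of the paper's representation \eqref{sol_uw}. In the mixed case your identity $0=w_0\cos\lambda+\mathrm{Im}\int_0^1 e^{i\lambda(1-s)}(e+i\tilde g)\,ds$ is exactly the paper's relation $c_1\cos\lambda=-G_1^{\sin}(1)-G_2^{\cos}(1)$, which forces the division by $\cos\lambda$.
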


\begin{proof}
We slightly generalize the proof of \cite[Lemma 3.3]{LoulaFHM_87_SCA} where the case
``$lr=cf$'' is studied (and obviously applies to ``$lr=fc$'').
Given $(u,w,\theta)\in H^1(I)^3$ we denote
\(
   g_1:= u'+\lambda w,\quad g_2:= w'-\lambda u - \theta
\)
so that $u$ and $w$ satisfy the ODE system
\[
   u'+\lambda w=g_1,\quad w'-\lambda u= \tilde g_2:= g_2+\theta
\]
with solution
\begin{equation} \label{sol_uw}
\begin{split}
   u(x) &= G_1^{\cos}(x) - G_2^{\sin}(x) - c_1\sin(\lambda x) + c_2\cos(\lambda x),\\
   w(x) &= G_1^{\sin}(x) + G_2^{\cos}(x) + c_1\cos(\lambda x) + c_2\sin(\lambda x).
\end{split}
\end{equation}
Here,
\begin{align*}
   G_1^{\cos}(x) &:= \int_0^x g_1(y)\cos\bigl(\lambda(x-y)\bigr)\dy,\quad
   G_2^{\sin}(x) := \int_0^x\tilde g_2(y)\sin\bigl(\lambda(x-y)\bigr)\dy,\\
   G_1^{\sin}(x) &:= \int_0^x g_1(y)\sin\bigl(\lambda(x-y)\bigr)\dy,\quad
   G_2^{\cos}(x) := \int_0^x \tilde g_2(y)\cos\bigl(\lambda(x-y)\bigr)\dy
\end{align*}
and $c_1,c_2\in\R$ are appropriate constants. Of course, $c_1=w(0)$, $c_2=u(0)$.

{\bf Case: $u$ and $w$ vanish at the same endpoint.}
If $u(0)=w(0)=0$, the case presented in \cite{LoulaFHM_87_SCA}, then $c_1=c_2=0$.
In the case that $u(1)=w(1)=0$ the analogous proof applies by symmetry.
But following the current path we calculate, by inverting matrix 
$\begin{pmatrix}-\sin\lambda & \cos\lambda \\ \cos\lambda & \sin\lambda\end{pmatrix}$,
\begin{align*}
   c_1 &= \sin\lambda\, G_1^{\cos}(1)-\sin\lambda\, G_2^{\sin}(1)
        - \cos\lambda\, G_1^{\sin}(1)-\cos\lambda\, G_2^{\cos}(1),
   \\
   c_2 &= -\cos\lambda\, G_1^{\cos}(1)+\cos\lambda\, G_2^{\sin}(1)
        - \sin\lambda\, G_1^{\sin}(1)-\sin\lambda\, G_2^{\cos}(1).
\end{align*}
In both cases we have with $|\sin \lambda|+|\cos \lambda| \le \sqrt{2}$ that
\[
   |c_1|+|c_2| \le \sqrt{2}\bigl(|G_1^{\cos}(1)| + |G_2^{\sin}(1)| + |G_1^{\sin}(1)| + |G_2^{\cos}(1)|\bigr)
               \lesssim \|g_1\| + \|g_2\| + \|\theta\|
\]
uniformly for $\lambda\in (0,2\pi)$. Representation \eqref{sol_uw} then shows that
\begin{align*}
   \|u\|, \|w\| &\le \|g_1\| + \|g_2\| + \|\theta\| + |c_1| + |c_2|
                 \lesssim \|g_1\| + \|g_2\| + \|\theta\|,\\
   \|u'\| &\le (1+\lambda)\|g_1\| + \lambda\|g_2\| + \lambda \|\theta\| + \lambda|c_1| + \lambda|c_2|
           \lesssim (1+\lambda)\|g_1\| + \lambda\|g_2\| + \lambda \|\theta\|, \\
   \|w'\| &\le \lambda\|g_1\| + (1+\lambda)\|g_2\| + (1+\lambda) \|\theta\| + \lambda|c_1| + \lambda|c_2|
           \lesssim \lambda\|g_1\| + (1+\lambda)\|g_2\| + (1+\lambda) \|\theta\|.
\end{align*}
Finally, $\|\theta\|\le \|\theta'\|$ for $\theta(0)\theta(1)=0$ by the Poincar\'e inequality.
We conclude that
\begin{align*}
   &\sup_{0\not=(\dn,\dq,\dm)\in L_2(I)^3}
   \frac{c(\dn,\dq,\dm;u,w,\theta)}{\bigl(\|\dn\|^2+\|\dq\|^2+\|\dm\|^2\bigr)^{1/2}}\\
   &= \bigl( \|u'+\lambda w\|^2 + \|w'-\lambda u-\theta\| + \|\theta'\|^2\bigr)^{1/2}
   \ge \beta \bigl(\|u\|_1^2 + \|w\|_1^2 + \|\theta\|_1^2\bigr)^{1/2}
\end{align*}
holds with a positive constant $\beta$ uniformly for $\lambda\in (0,2\pi)$.

{\bf Case: $u$ and $w$ vanish at different endpoints.}
If $u(0)=w(1)=0$, relations \eqref{sol_uw} show that
\[
   c_2=0,\quad c_1\cos\lambda = -G_1^{\sin}(1)- G_2^{\cos}(1).
\]
Analogously, if $u(1)=w(0)=0$ then
\[
   c_1=0,\quad c_2\cos\lambda = -G_1^{\cos}(1) + G_2^{\sin}(1).
\]
In both cases, bounding one of the constants requires division by $\cos\lambda$ and shows
\[
   \bigl(\|u\|_1^2+\|w\|_1^2+\|\theta\|_1^2\bigr)^{1/2}
   \lesssim | \cos \lambda|^{-1} \bigl(\|g_1\|^2 + \|g_2\|^2 + \|\theta'\|^2\bigr)^{1/2}
\]
that we obtain analogously to the first case.
This yields \eqref{c_infsup} with $\beta=O\left(|\cos \lambda|\right)$, and finishes the proof.
\end{proof}

\begin{prop} \label{prop_adj}
Let $g_u,g_w,g_\theta,g_n,g_q,g_m\in L_2(I)$ be given and assume that Assumption~\ref{ass} holds true.
Problem \eqref{adj} has a unique weak solution $(u,w,\theta,n,q,m)\in U^c_{lr}$. It is bounded as
\[
   \|u\|_1 + \|w\|_1 + \|\theta\|_1 + \|n\|_1 + \|q\|_1 + \|m\|_1
   \lesssim
   \Cstab{} (\|g_u\| + \|g_w\| + \|g_\theta\| + \|g_n\| + \|g_q\| + \|g_m\|)
\]
with a hidden constant that is independent of all the parameters and data
$g_u,g_w,g_\theta,g_n,g_q,g_m$, and where $\Cstab{}$ is the number defined in Assumption~\ref{ass}.
\end{prop}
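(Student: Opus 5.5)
Following Loula \emph{et al.}~\cite{LoulaFHM_87_SCA}, I would apply the Babu\v{s}ka--Brezzi theory to the mixed formulation \eqref{adj_mixed}. The stress space is $L_2(I)^3$ and the kinematic space is $U^{1,c}_{lr}$ with the $H^1$ norm. The plan has three steps: well-posedness of \eqref{adj_mixed}, upgrading $(n,q,m)$ from $L_2$ to $H^1$ regularity, and tracking the constants.

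\textbf{Step 1: the Brezzi conditions.} Continuity of $a$ and $c$ is clear since the parameters are bounded. For each boundary combination $lr$ I would read off the relevant case in Lemmas~\ref{la_coercive} and~\ref{la_infsup}.
\begin{itemize}
\item For $lr\in\{cc,cp,pc,cs,sc,ps,sp\}$: $\theta$ vanishes at an endpoint, and $u,w$ vanish at a common endpoint. So $\beta=O(1)$, and the general coercivity constants $\alpha_n,\alpha_q$ apply.
\item For $sd,ds$: $u$ and $w$ vanish at opposite ends, so $\beta\sim|\cos\lambda|$. Also $q=0$ at one end, so $\alpha_q=(C_q^{(0)}C_n)^{-1}$.
\item For $cd,dc$: we have $\beta=O(1)$ and $q(0)q(1)=0$.
\item For $cr,rc,cf,fc,pr,rp$: $n=q=0$ at one end, so $\alpha_n=\alpha_q=1$, and $\beta=O(1)$.
\end{itemize}
In each case the coercivity constant on $K_{lr}$ is $\alpha:=\min\{\alpha_n,\alpha_q,1\}$. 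Brezzi's theorem then gives a unique $(n,q,m,u,w,\theta)$ with
\[
\|(n,q,m)\|+\|(u,w,\theta)\|_1\lesssim (\alpha^{-1}+\beta^{-1}+\alpha^{-1}\beta^{-1}\|a\|+\dots)\|g\|.
\]
Since $\alpha^{-1}=\max\{\min\{\epsilon^{-2},C_n\},\min\{\mu^{-1}\epsilon^{-2},C_qC_n\},1\}$, this matches $\Cstab{}$ in the first case.

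\textbf{Step 2: $H^1$ regularity of the stresses and identification with \eqref{adj}.} I would test the second equation of \eqref{adj_mixed} with smooth compactly supported functions. This gives $n'+\lambda q=g_u$, $q'-\lambda n=g_w$, $m'+q=g_\theta$ in the distributional sense, hence $n,q,m\in H^1(I)$ with
\[
\|n'\|+\|q'\|+\|m'\|\lesssim \|g\|+(1+\lambda)\|(n,q,m)\|.
\]
Testing next with general $(\du,\dw,\dtheta)\in U^{1,c}_{lr}$ and integrating by parts recovers the natural boundary conditions on $(n,q,m)$, so $(n,q,m)\in U^{2,c}_{lr}$. The first equation yields \eqref{a1} strongly. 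Thus we obtain a weak solution in $U^c_{lr}$. Uniqueness follows from the uniqueness of the mixed solution, since any solution of \eqref{adj} solves \eqref{adj_mixed}.

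\textbf{Step 3: constant tracking (the main obstacle).} The crude Brezzi bound contains products such as $\alpha^{-1}\beta^{-1}$ and $\alpha^{-1/2}\beta^{-1}$, whereas the statement claims the clean factor $\Cstab{}$. In the $sd,ds$ case this factor is $\alpha^{-1}|\cos\lambda|^{-2}$, which suggests the bound $\alpha^{-1}\beta^{-2}$.

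I would rederive the estimate explicitly. Split $(n,q,m)=\sigma_0+\sigma_K$, where $\sigma_0$ is the minimal-norm lifting of the constraint, so $\|\sigma_0\|\le\beta^{-1}\|g\|$. Coercivity on $K_{lr}$, with $\|a\|\le1$, then controls $\sigma_K$ by $\alpha^{-1}(\|g\|+\beta^{-1}\|g\|)$. Finally, the inf-sup condition for $c$ bounds $(u,w,\theta)$ by $\beta^{-1}(\|g\|+\|\sigma\|)$. Collecting terms, the worst factor is $\alpha^{-1}\beta^{-2}$, which equals $\Cstab{}$ up to the $\max\{\cdot,1\}$. I must make sure that $\lambda<2\pi$ keeps the $(1+\lambda)$ factors from Step~2 uniformly bounded.
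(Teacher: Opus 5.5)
Your route is the paper's. You apply Babu\v{s}ka--Brezzi to \eqref{adj_mixed}, with coercivity on the kernel from Lemma~\ref{la_coercive} and the inf-sup condition from Lemma~\ref{la_infsup}, and then obtain $H^1$-regularity and the natural boundary conditions from \eqref{a2}. The only difference is the constant bookkeeping. The paper quotes the sharper Brezzi bounds available because $a$ is symmetric positive semidefinite, giving $\max\{\alpha^{-1},\alpha^{-1/2}\beta^{-1},\beta^{-2}\}=\max\{\alpha^{-1},\beta^{-2}\}$. Your splitting $\sigma_0+\sigma_K$ gives the generic factor $\alpha^{-1}\beta^{-2}$. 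This costs nothing relative to the statement: $\beta\gtrsim1$ except for $sd,ds$, and there $\Cstab{}$ is itself defined as a product with $|\cos\lambda|^{-2}$.

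The step that does not go through is the last one, ``$\alpha^{-1}\beta^{-2}$ equals $\Cstab{}$''. You verify it only for the first group, and the paper's ``filling in'' sentence has the same hole.

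\textbf{Why it fails.} When $q(0)q(1)=0$, Lemma~\ref{la_coercive} still has $\alpha_n^{-1}=\min\{\epsilon^{-2},C_n(\lambda)\}$, and this term is absent from $\Cstab{}$ for $cd,dc,sd,ds$. Since $C_q^{(0)}(\lambda)C_n(\lambda)$ is bounded on $(0,2\pi)$ (it tends to $1/3$ as $\lambda\to0$), $\Cstab{}$ is $O(1)$ for $cd,dc$ and $O(|\cos\lambda|^{-2})$ for $sd,ds$. By contrast, $C_n(\lambda)\sim\lambda^{-2}$.

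\textbf{The cases $sd,ds$ can be repaired.} There the kernel carries $n(0)=q(1)=0$ (resp.\ $q(0)=n(1)=0$). Then \eqref{rel_nq} gives $q_0\cos\lambda=0$ (resp.\ $n_0\cos\lambda=0$), with $\cos\lambda\neq0$ by Assumption~\ref{ass}. Hence $n=q=0$, and then $m=0$. So $K_{lr}=\{0\}$, no coercivity is needed, and your bound reduces to $\beta^{-2}\lesssim|\cos\lambda|^{-2}\le\Cstab{}$.

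\textbf{The cases $cd,dc$ cannot be repaired.} Take $u=w=\theta=0$, $n=\cos(\lambda(x-1))$, $q=\sin(\lambda(x-1))$ and $m=(\cos(\lambda(x-1))-1)/\lambda$. This lies in $U^c_{cd}$ and solves \eqref{adj} with $g_n=\epsilon^2 n$, $g_q=\mu\epsilon^2 q$, $g_m=m$ and $g_u=g_w=g_\theta=0$. As $\lambda\to0$ we have $\|n\|\to1$, while $\|g_n\|+\|g_q\|+\|g_m\|\lesssim\epsilon^2+\lambda$. Hence the stated estimate with $\Cstab{}=O(1)$ fails as $\epsilon,\lambda\to0$. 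What your argument (and the paper's) actually proves for $cd,dc$ is the bound with $\max\{\min\{\epsilon^{-2},C_n(\lambda)\},1\}$ in place of the constant in Assumption~\ref{ass}.
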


\begin{proof}
We consider the saddle-point formulation \eqref{adj_mixed} of problem \eqref{adj}. Since $\epsilon$ and $\mu$ are bounded, bilinear forms $a(\cdot,\cdot):\;L_2(I)^3\times L_2(I)^3\to\R$ and
$c(\cdot,\cdot):\;L_2(I)^3\times H^1(I)^3\to\R$ are uniformly bounded.
By Lemma~\ref{la_coercive}, bilinear form $a(\cdot,\cdot)$ is coercive on the kernel $K_{lr}$ of
$c(\cdot,\cdot)$ with number $\alpha:=c\min\{\alpha_n,\alpha_q,1\}$ for a fixed constant $c>0$.
Furthermore, bilinear form $c(\cdot,\cdot)$ satisfies inf-sup property \eqref{c_infsup}
with number $\beta>0$ according to the details given by Lemma~\ref{la_infsup}.
By the Babu\v{s}ka--Brezzi theory, cf.~\cite[Theorem 4.2.3]{BoffiBF_13_MFE},
system \eqref{adj_mixed} has a unique solution which is bounded as
\begin{align*}
   &\|n\| + \|q\| + \|m\| \lesssim
   \alpha^{-1}(\|g_n\| + \|g_q\| + \|g_m\|) + \alpha^{-1/2}\beta^{-1}(\|g_u\| + \|g_w\| + \|g_\theta\|),\\
   &\|u\|_1 + \|w\|_1 + \|\theta\|_1 \lesssim
   \alpha^{-1/2}\beta^{-1}(\|g_n\| + \|g_q\| + \|g_m\|) + \beta^{-2}(\|g_u\| + \|g_w\| + \|g_\theta\|).
\end{align*}
Of course, the solution of \eqref{adj_mixed} solves \eqref{adj} in the weak sense. Therefore, the
regularities $n,q,m\in H^1(I)$ and bounds for the $H^1(I)$-norms of $n,q,m$ are
obtained by relations \eqref{a2},
\[
   \|n'\|\le \lambda\|q\|+ \|g_u\|,\quad
   \|q'\|\le \lambda\|n\| + \|g_w\|,\quad
   \|m'\|\le \|q\|+\|g_\theta\|.
\]
Combination of all the estimates gives
\begin{align*}
   &\|u\|_1 + \|w\|_1 + \|\theta\|_1 + \|n\|_1 + \|q\|_1 + \|m\|_1\\
   &\lesssim
   \max\{\alpha^{-1},\alpha^{-1/2}\beta^{-1},\beta^{-2}\}
   (\|g_u\| + \|g_w\| + \|g_\theta\| + \|g_n\| + \|g_q\| + \|g_m\|).
\end{align*}
Filling in the information for $\alpha$ from Lemma~\ref{la_coercive} and for $\beta$
from Lemma~\ref{la_infsup}, we obtain an upper bound with stability constant $\Cstab{}$
as defined in Assumption~\ref{ass}.
The boundary conditions implying $(n,q,m)\in U^{2,c}_{cf}$ are immediate.
\end{proof}

\subsection{Proof of Theorem~\ref{thm}} \label{sec_pf_thm}

The well-posedness of \eqref{VF} follows by the Babu\v{s}ka--Brezzi theory.
Let us recall the ingredients.
Using Lemma~\ref{la_trace} we see that bilinear form $b(\cdot,\cdot)$ and functional $L$
are uniformly bounded in the corresponding spaces and norms. The injectivity
\[
   \bv\in V:\ b(\bu,\bv)=0\quad\forall \bu\in U\quad\Rightarrow\quad\bv=0
\]
can be seen as follows. First, selecting $\bu=(\bu_0,\hat\bu)$ with $\bu_0=0$ and
arbitrary $\hat\bu\in\R^{6N+6}$ (subject to the imposed boundary conditions) it is clear that
$\bv\in U^c_{lr}$. Then, $\bv$ solves adjoint problem \eqref{adj} and its mixed form
\eqref{adj_mixed} with homogeneous data. By Proposition~\ref{prop_adj}, $\bv=0$. Now, inf-sup property
\begin{equation} \label{infsup}
   \sup_{0\not=\bv\in V} \frac {b(\bu,\bv)}{\|\bv\|_V}
   \gtrsim
   \|\bu\|_U
   \quad\forall \bu\in U^c_{lr}
\end{equation}
is implied by
\begin{equation} \label{infsup1}
   \sup_{0\not=\bv\in U^c_{lr}}
   \frac{b((\bu_0,0),\bv)}{\|\bv\|_V}
   \gtrsim \|\bu_0\| \quad\forall \bu_0\in U_0
\end{equation}
and
\begin{equation} \label{infsup2}
   \sup_{0\not=\bv\in V} \frac {b((0,\hat\bu),\bv)}{\|\bv\|_V}
   \gtrsim
   \|\hat\bu\|_\gamma\quad\forall \hat\bu\in \R^{6N+6},
\end{equation}
see \cite[Theorem 3.3]{CarstensenDG_16_BSF}. Here, we use that
\[
   \{\bv\in V;\; b((0,\hat\bu),\bv)=0\ \forall\hat\bu\} = U^c_{lr},
\]
already observed before.
Relation \eqref{infsup2} holds by Lemma~\ref{la_trace} with a positive constant that is independent
of all the involved parameters, and \eqref{infsup1} follows from Proposition~\ref{prop_adj}
with intrinsic number $\Cstab{-1}$ (multiplied with a fixed positive constant) where $\Cstab{}>0$
is as specified in Assumption~\ref{ass}.
This proves the well-posedness of \eqref{VF} and boundedness \eqref{stab} of its solution.

From~\cite{DemkowiczG_11_ADM} we know that the DPG scheme minimizes the residual
in the dual test norm $\|\cdot\|_{V'}$. The quasi-optimal error estimate in the $U$-norm
is then clear by the equivalence of both norms. The bound $\|\cdot\|_U\lesssim \Cstab{} \|\cdot\|_{V'}$
is due to inf-sup property \eqref{infsup} with inf-sup number $O(\Cstab{-1})$, and the uniform boundedness
$\|\cdot\|_{V'}\lesssim \|\cdot\|_U$ is due the uniform boundedness of bilinear form $b(\cdot,\cdot)$.
This finishes the proof of Theorem~\ref{thm}.
\qed

\section{Numerical experiments} \label{sec_numerics}
In this section, we investigate the accuracy and stability of the DPG method through numerical experiments. The field variables $\boldsymbol{u}_0=(u,w,\theta,n,q,m)$ are approximated by piecewise polynomials of degree $p$ on the mesh $\cT$, and the test functions $\boldsymbol{v} = (\delta u,\delta w, \delta\theta, \delta n, \delta q, \delta m)$ are chosen as piecewise polynomials of degree $p+\Delta p$, where $\Delta p$ is the enrichment degree. As established in Theorem~\ref{thm}, under Assumption~\ref{ass}, the DPG scheme is well-posed and yields quasi-optimal convergence, with error bounds controlled by the stability constant $C_\mathrm{stab}$. 

Since $C_\mathrm{stab}$ is defined in terms of the curvature-dependent quantities $C_n(\lambda)$, $C_q(\lambda)$, and $C_q^{(0)}(\lambda)$, the accuracy of the method for small $\epsilon$ is directly linked to their behavior. They exhibit singular growth of order $\lambda^{-2}$ as $\lambda \rightarrow 0$. Formally, this predicts error amplification in the beam limit reflecting the degeneration of the curved-arch scaling. More relevant for the intended regime of deep arches is that the constants remain nontrivial in size and may contribute to amplification even for moderate values of $\lambda$ as illustrated in Figure~\ref{fig:stability}. 

This observation motivates the use of a scaled graph norm in the test space in order to balance the effect of curvature and improve robustness. We follow \cite{gopalakrishnan_dispersive_2014} and define for $\bv \in V$ the parameter-dependent norm
\begin{equation} \label{eqn:graph norm}
\begin{split}
\| \bv \|^2_{V,\epsilon, \lambda} := &\| \delta n' + \lambda \delta q \|^2 + \| \delta q' - \lambda \delta n \|^2 + \| \delta n' + \delta q \|^2 \\ &+ \| \epsilon^2\delta n+\delta u'+ \lambda \delta w\|^2 + \|\mu \epsilon^2 \delta q+\delta w'-\lambda \delta u - \delta \theta\|^2 + \|\delta m + \delta \theta'\|^2 \\
&+ \tau_\mathrm{num} \left(\|\delta n\|^2+\|\delta q\|^2+\|\delta m\|^2+\|\delta u\|^2+\|\delta w\|^2+\|\delta \theta\|^2\right),
\end{split}
\end{equation}
where $\tau_\mathrm{num}>0$ is a numerical stabilization parameter. It should be noted, that while the enrichment degree $\Delta p=2$ is sufficient for discrete stability in the present variational setting, utilization of the graph norm involves constrained approximation and may induce parametric locking in the local problems. Therefore, a higher enrichment degree $\Delta p = 4$ is used when the graph norm is employed in the computations.

\subsection{Cantilever arch with point load at tip}
The first numerical experiment concerns a cantilever arch fixed at $x=1$ and loaded by the transversal force $f_w(x) = \delta(x)$ concentrated at $x=0$. The parameters are chosen as $\epsilon=10^{-4}$, $\mu=1$, $\lambda=6$, and the approximation degree is $p=1$. For these particular boundary conditions, the stability constant is expected to be moderate and the benefit of using the graph norm is small. Therefore, we show the results obtained with the standard test space norm for this case.

We also compare the performance of the DPG method with the state-of-the art reduced-strain finite element scheme based on the minimization of the reduced strain energy functional
\[
F(u,w,\theta) := \frac{1}{2}\left(\epsilon^2\|\Pi_h(u' + \lambda w)\|^2  
+ \mu\epsilon^2\|\Pi_h(w' - \lambda u - \theta)\|^2
   + \|\theta'\|^2\right) 
   - \vdual{f_u}{u}-\vdual{f_w}{w}
\]
over the set of continuous, piecewise linear displacement fields $(u,w,\theta)$ associated to the mesh $\cT$ and satisfying prescribed kinematic constraints. Here $\Pi_h:H^1(I) \rightarrow L_2(I)$ stands for the $L_2$-projection to piecewise constants on $\cT$ so that the method could also be interpreted as a mixed method, where the axial force $n=\epsilon^{-2}(u'+\lambda w)$ and the shear force $q=\mu^{-1}\epsilon^{-2}(w'-\lambda u - \theta)$ are approximated by piecewise constants. The strain reduction or mixed method is needed to circumvent numerical locking arising from the inextensional strain condition $u'+\lambda w=0$ and the Euler--Bernoulli condition $w'-\lambda u - \theta=0$ arising when $\epsilon$ is small. 

The approximations of the displacement and stress quantities of interest are shown side by side for the DPG method and the reduced-strain FEM in Figures \ref{fig:displacements} and \ref{fig:stresses}, respectively, for a relatively coarse mesh consisting of 8 elements.
  
\begin{figure}
\begin{tabular}{ll}
\includegraphics[scale=0.4]{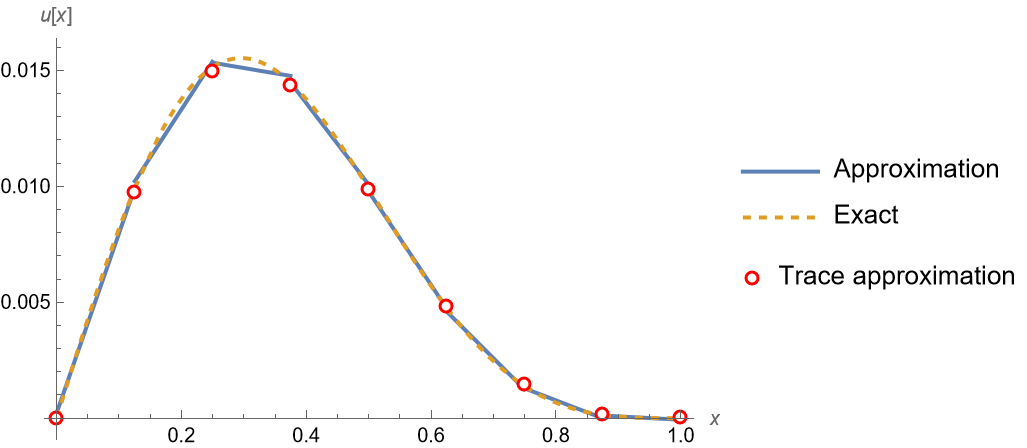}& \includegraphics[scale=0.4]{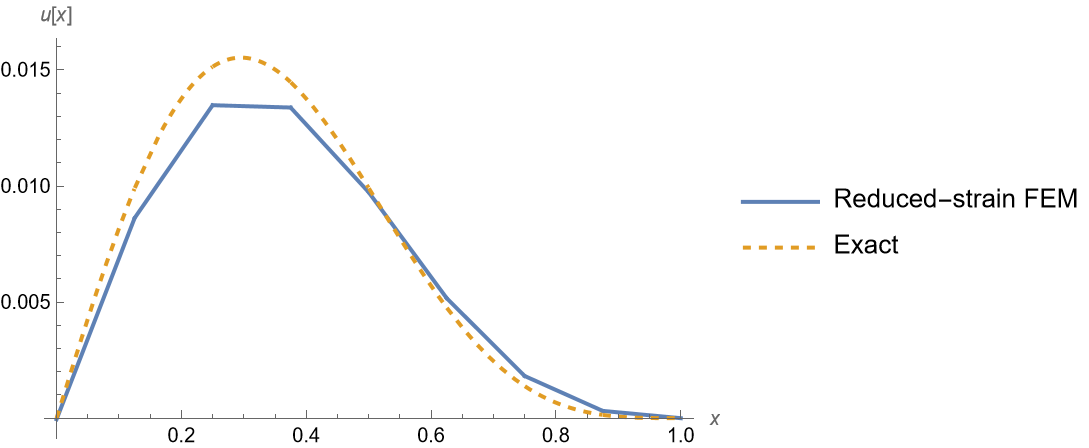} \\
\includegraphics[scale=0.4]{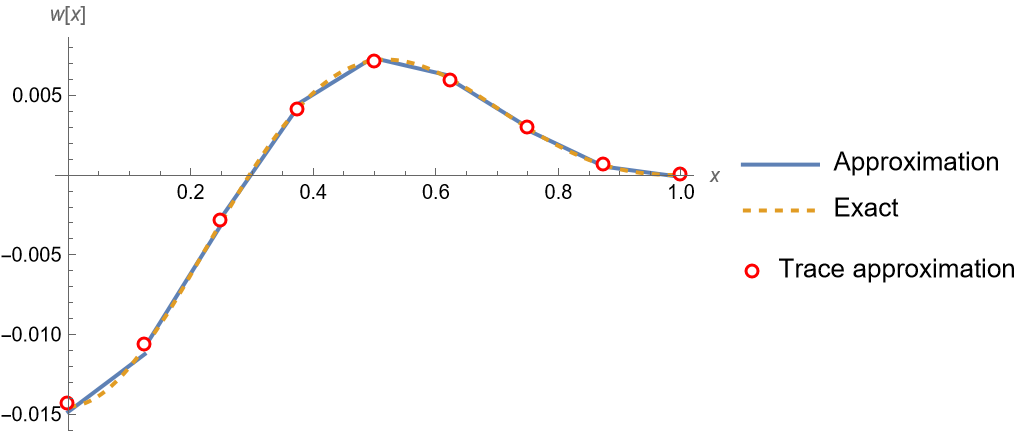}& \includegraphics[scale=0.4]{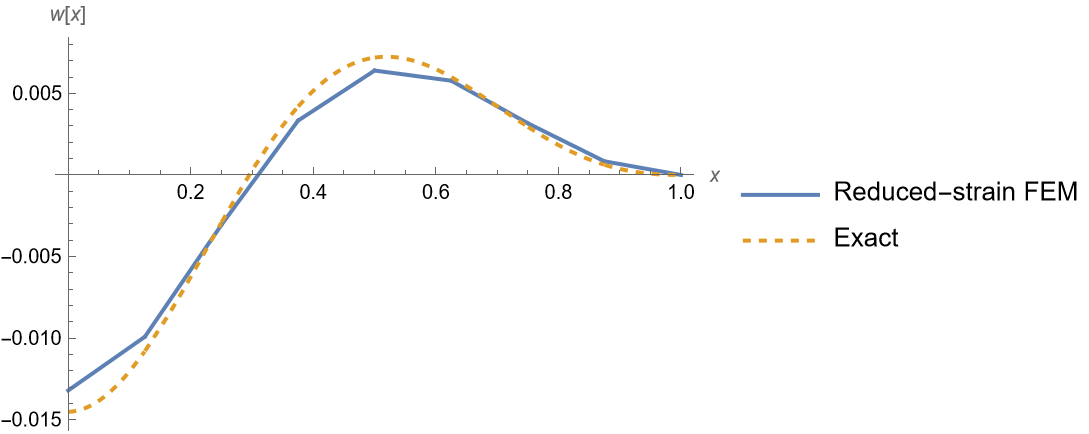} \\
\includegraphics[scale=0.4]{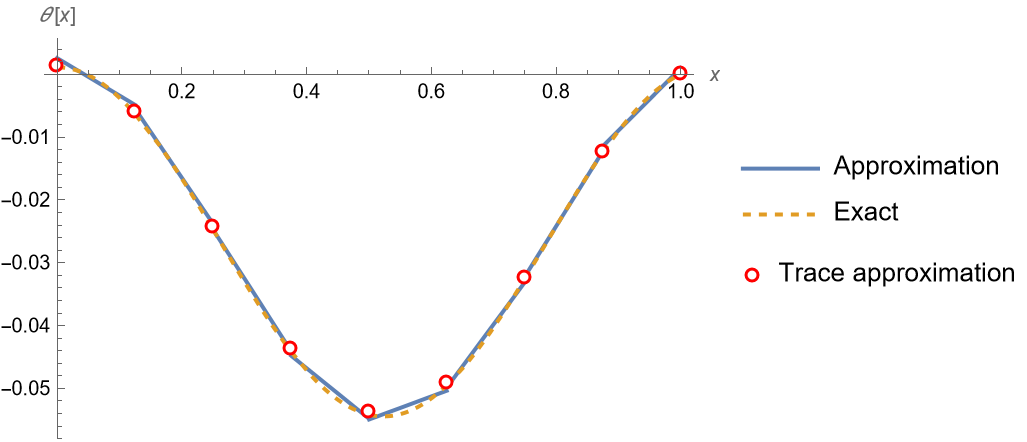}& \includegraphics[scale=0.4]{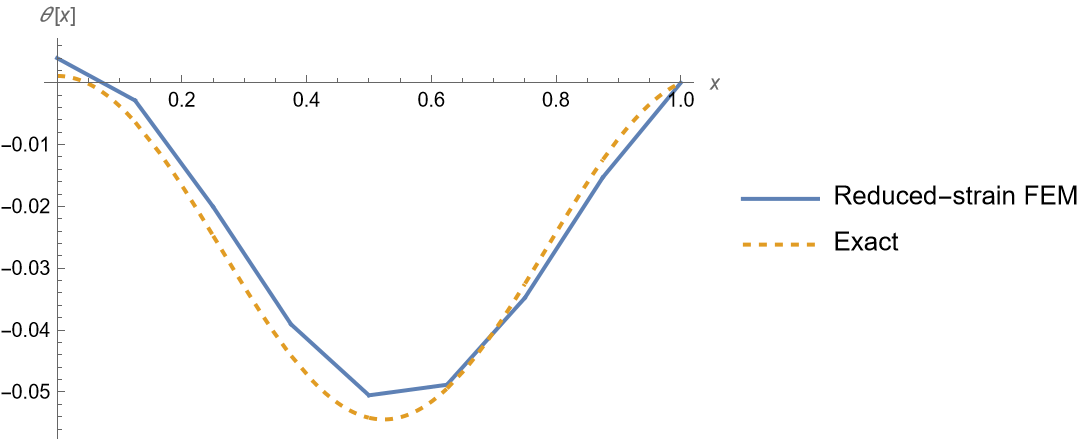}
\end{tabular}
\caption{Approximated displacement quantities of the cantilever circular arch problem with 8 elements. DPG (left) vs.~reduced-strain FEM (right).}
\label{fig:displacements}
\end{figure}

\begin{figure}
\begin{tabular}{ll}
\includegraphics[scale=0.4]{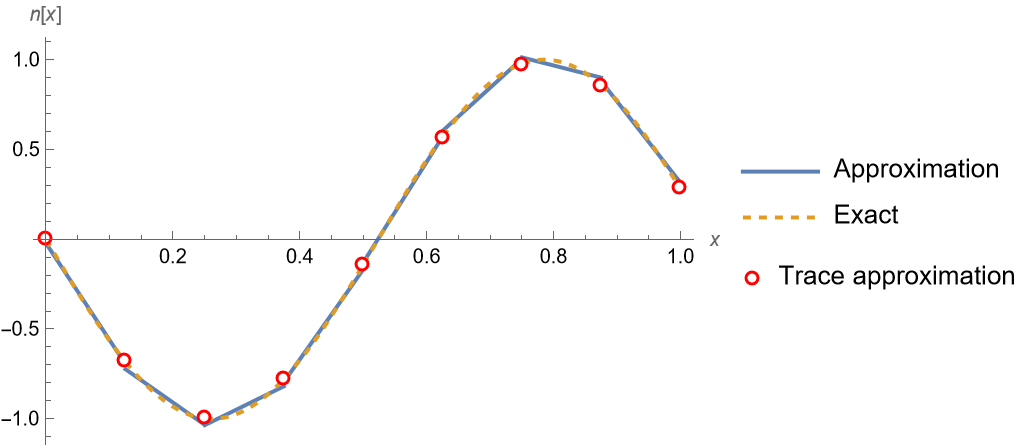}&\includegraphics[scale=0.4]{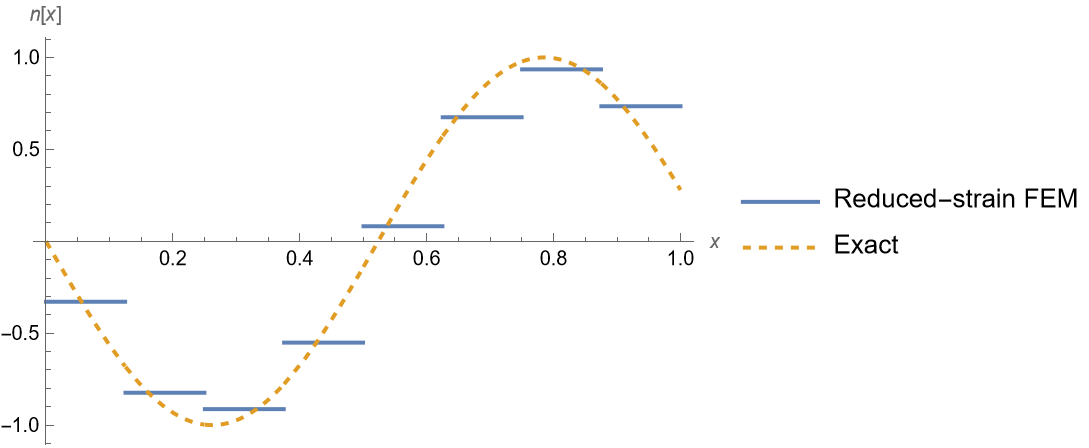} \\
\includegraphics[scale=0.4]{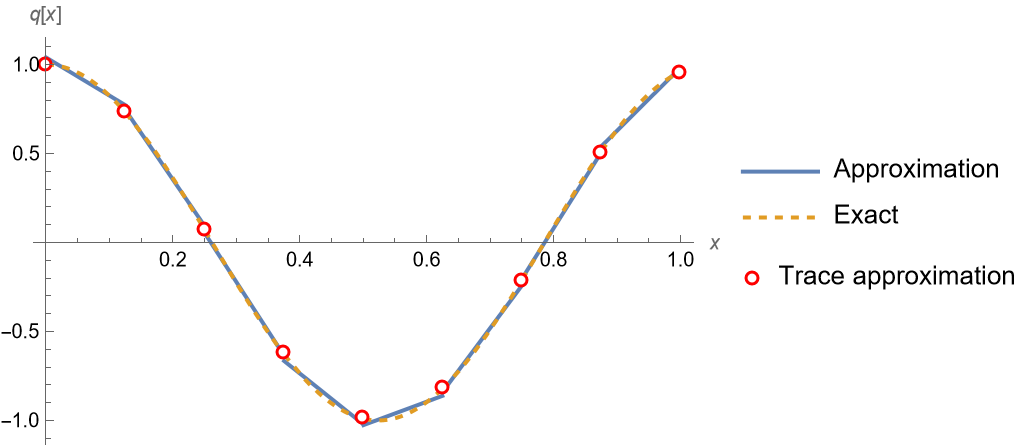}&\includegraphics[scale=0.4]{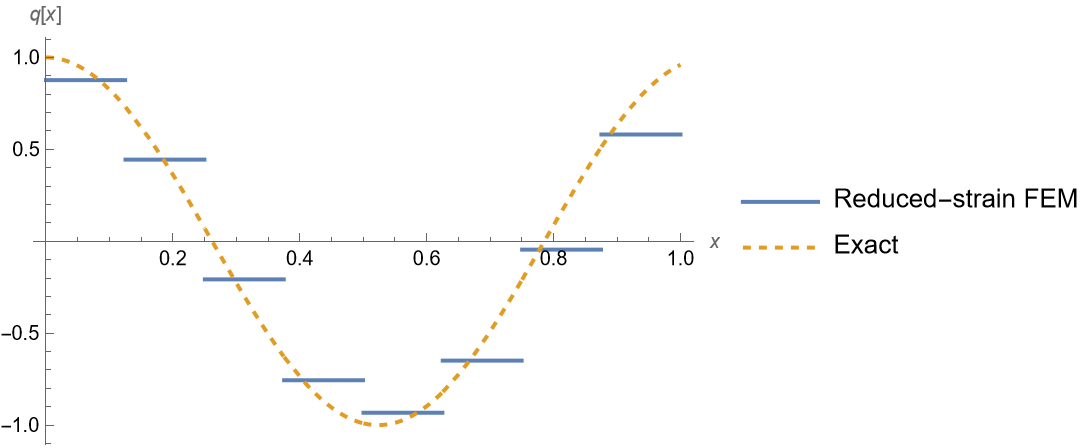} \\
\includegraphics[scale=0.4]{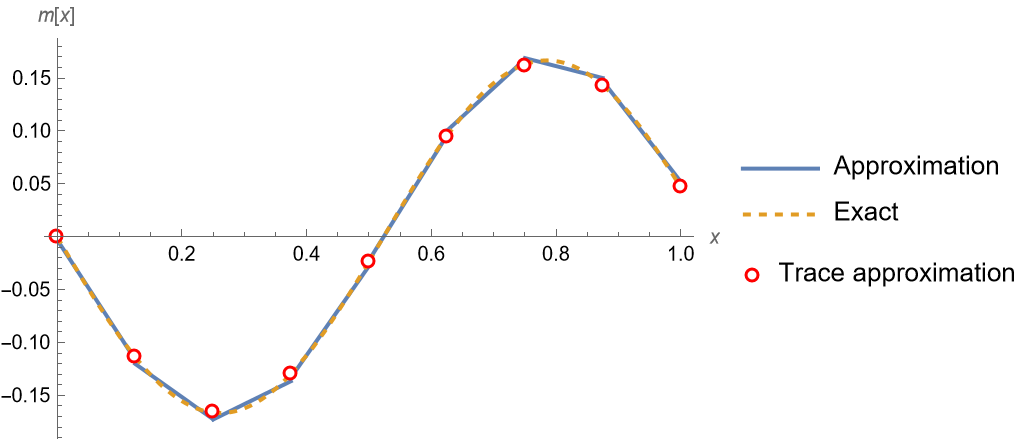}&\includegraphics[scale=0.4]{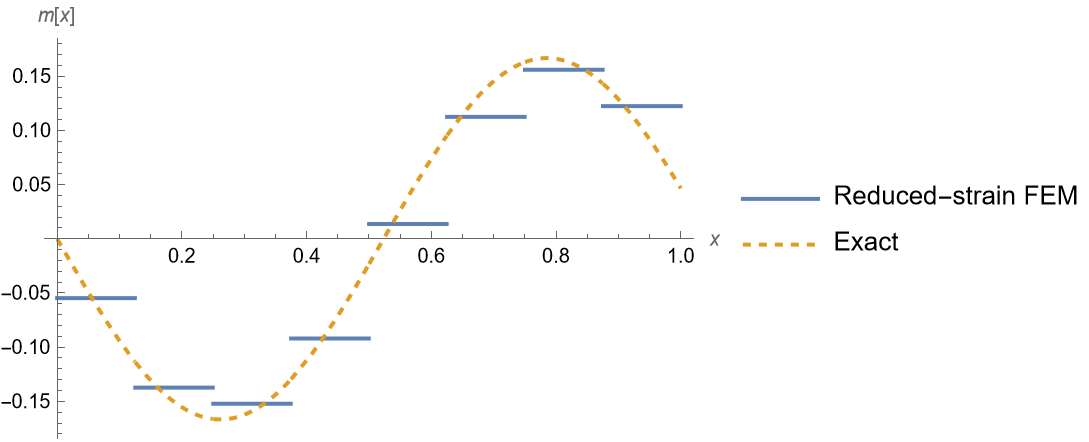}
\end{tabular}
\caption{Approximated stress quantities of the cantilever circular arch problem with 8 elements. DPG (left) vs.~reduced-strain FEM (right).}
\label{fig:stresses}
\end{figure}

It is evident that both methods deliver feasible approximations and that the ultra-weak DPG formulation represents the stresses with the same degree of accuracy as the displacements. When making judgement of the relative merits of each formulation e.g.~by comparing their accuracy, one should bear in mind that the number of global trace degrees of freedom for the ultra-weak formulation is double as compared with the number of degrees of freedom of the displacement-based method.

Figure~\ref{fig:energyconv} displays the convergence rate of the DPG method evaluated using the a posteriori error indicator $\|(T(\boldsymbol{u}-\boldsymbol{u}_h)\|_V$ built in the framework. Same parameter values are used as in the above comparison, but also the case $\mu=0$ corresponding to the Euler--Bernoulli limit with vanishing shear strains is included for comparison. In accordance with the theory, quadratic convergence rate is observed as dictated by the best-approximation error of the field variables $\boldsymbol{u}_0$. The influence of the modeling parameter $\mu$ on the accuracy is barely noticable.

\begin{figure}[ht!]
\centering
\includegraphics[scale=0.5]{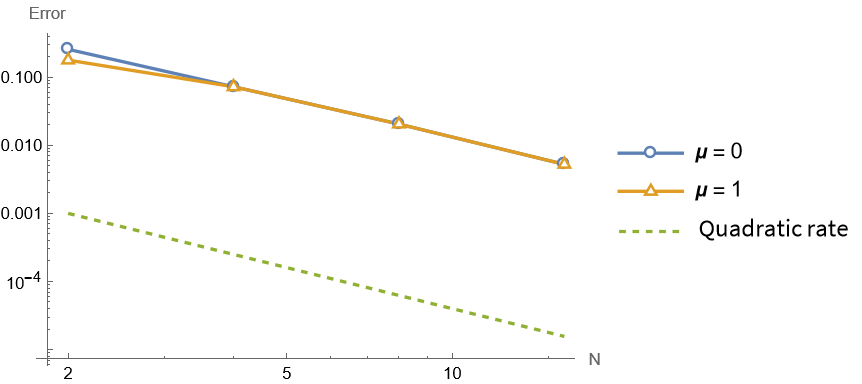}
\caption{Convergence of the full DPG solution measured by the built-in a posteriori error indicator as the number of sub-invervals $N$ increases.}
\label{fig:energyconv}
\end{figure}

\subsection{Fully clamped arch under distributed load}
As a second numerical example we consider a circular arch that is fully clamped at both ends and subjected to the distributed loads $f_u = \cos x$ and $f_w= \sin x$. The curvature parameter is set to $\lambda=3$, and we consider the Euler--Bernoulli limit ($\mu=0$). To investigate numerical accuracy, we study the relative $L_2$-errors for different quantities of interest as a function of the number of elements $N$, and we compare the effect of the slenderness parameter $\epsilon$ by considering both $\epsilon=10^{-1}$ and $\epsilon = 10^{-3}$.

This experiment is conducted using the two mentioned choices of the test space norm: the standard test space norm and the scaled graph norm \eqref{eqn:graph norm}, where the scaling parameter is fixed to $\tau_{\mathrm{num}}=10^{-5}$ based on numerical testing.  Figures~\ref{fig:L_2_conv_eps_-1} and \ref{fig:L_2_conv_eps_-3} depict the convergence behavior for these two cases. The results confirm that the scaled graph norm leads to smaller errors and the expected quadratic convergence in both cases, whereas the error level is noticeably elevated for the standard test space norm. It should be noted that, for the standard test space norm, numerical testing also revealed that \emph{a reduced enrichment degree} $\Delta p = p+1$ can partially mitigate pre-asymptotic error amplification in deep arches, but this choice may lead to ill-posedness on very coarse meshes as can be expected from discrete stability analysis. 

Overall, the experiments support the theoretical predictions, with the scaled graph norm providing robust performance across the full range of the curvature parameter.
\begin{figure}[ht!]
\begin{tabular}{ll}
\includegraphics[scale=0.5]{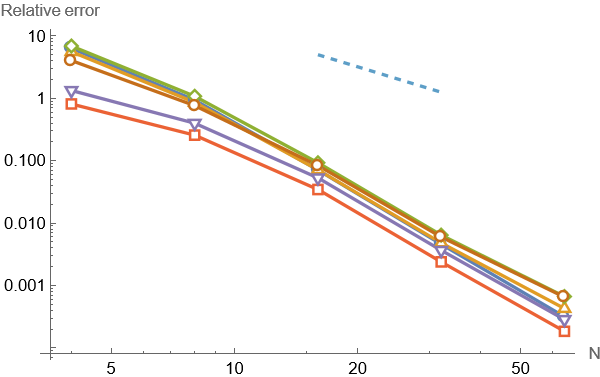}& \includegraphics[scale=0.5]{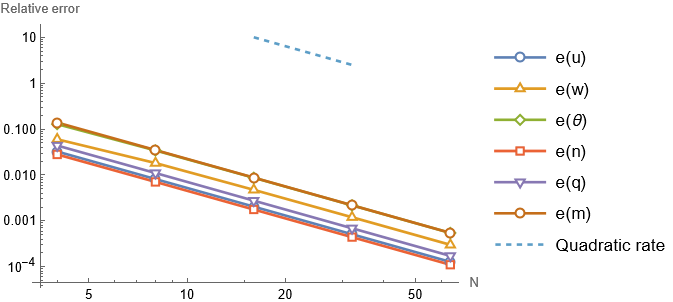} \\
\end{tabular}
\caption{Convergence of the quantities of interested of the fully clamped arch at $\epsilon=10^{-1}$. Standard test space norm (left) vs.~scaled graph norm with $\tau_{\mathrm{num}}=10^{-5}$ (right).}
\label{fig:L_2_conv_eps_-1}
\end{figure}
\begin{figure}[ht!]
\begin{tabular}{ll}
\includegraphics[scale=0.5]{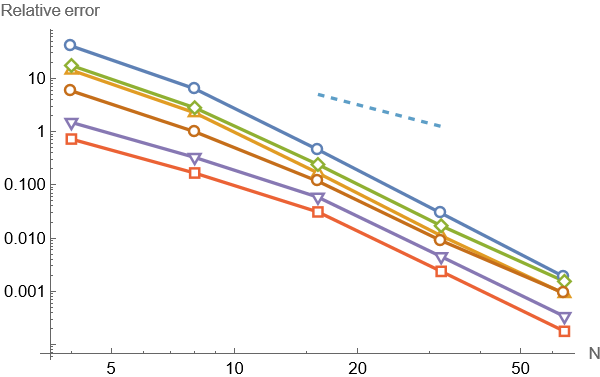}& \includegraphics[scale=0.5]{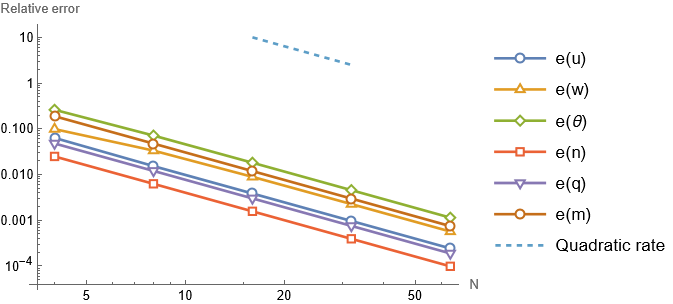} \\
\end{tabular}
\caption{Convergence of the quantities of interested of the fully clamped arch at $\epsilon=10^{-3}$. Standard test space norm (left) vs.~scaled graph norm with $\tau_{\mathrm{num}}=10^{-5}$ (right).}
\label{fig:L_2_conv_eps_-3}
\end{figure}

\newpage
\bibliographystyle{siam}
\bibliography{paper}

\end{document}